\documentclass{article}

\usepackage[dvipsnames]{xcolor}
\usepackage[margin=2.5cm]{geometry}
\usepackage{microtype}

\usepackage{mathtools}
\usepackage{amssymb}
\usepackage{amsthm}

\usepackage{newpxmath}
\usepackage{newpxtext}

\usepackage{bm}
\usepackage{dsfont}

\usepackage{array,multirow}
\usepackage{thmtools}
\usepackage{thm-restate}
\usepackage{authblk}
\usepackage{setspace}
\usepackage[shortlabels]{enumitem}
\usepackage[linesnumbered,commentsnumbered,ruled,vlined]{algorithm2e}
\usepackage[noadjust]{cite}
\usepackage{booktabs}
\usepackage[colorinlistoftodos, shadow, linecolor=white, backgroundcolor=white]{todonotes}

\usepackage{hyperref}
\newcommand\myshade{100}
\colorlet{mylinkcolor}{NavyBlue}
\colorlet{mycitecolor}{YellowOrange}
\colorlet{myurlcolor}{Aquamarine}
\hypersetup{
  linkcolor  = mylinkcolor!\myshade!black,
  citecolor  = mycitecolor!\myshade!black,
  urlcolor   = myurlcolor!\myshade!black,
  colorlinks = true,
  pdftitle   = {Circuit Diameter of Polyhedra is Strongly Polynomial},
  pdfauthor  = {Bento Natura},
  pdfsubject = {Combinatorial optimization, polyhedral geometry},
  pdfkeywords = {circuit diameter, polyhedra, polynomial Hirsch conjecture, linear programming, strongly polynomial},
}
\usepackage{cleveref}
\newcommand{\LineRef}[1]{Line~\ref{#1}}
\newcommand{\LineRefRange}[2]{Lines~\ref{#1} to~\ref{#2}}

\newcommand{\circuits}{\mathcal{C}}
\newcommand{\EE}{\mathcal E}
\newcommand{\aug}{\operatorname{aug}}

\newcommand{\supp}{\mathrm{supp}}
\newcommand{\0}{\mathbf{0}}
\newcommand{\rk}{\operatorname{rk}}
\newcommand{\set}[1]{\left\{ #1 \right\}}
\newcommand{\refidx}{r}
\newcommand{\eps}{\varepsilon}
\newcommand{\poly}{\operatorname{poly}}
\newcommand{\thresh}{\tau}
\newcommand{\R}{\mathbb{R}}

\DeclareMathOperator*{\argmax}{arg\,max}
\DeclareMathOperator*{\argmin}{arg\,min}

\renewcommand{\phi}{\varphi}
\renewcommand{\rho}{\varrho}

\foreach \x in {A,...,Z}{%
    \expandafter\xdef\csname m\x\endcsname{\noexpand\mathbf{\x}}
}
\foreach \x in {Alpha,...,Omega}{%
   \expandafter\xdef\csname m\x\endcsname{\noexpand\mathbf{\x}}
}

\newtheorem{theorem}{Theorem}[section]
\newtheorem{lemma}[theorem]{Lemma}

\newtheorem{corollary}[theorem]{Corollary}

\newtheorem{claim}[theorem]{Claim}

\theoremstyle{definition}
\newtheorem{definition}[theorem]{Definition}

\newenvironment{claimproof}[1][\proofname]
{
    \proof[#1 of Claim]%
        
}
{
    \endproof
}

\newcounter{Hequation}

\makeatletter
\g@addto@macro\equation{\stepcounter{Hequation}}
\makeatother

\title{Circuit Diameter of Polyhedra is Strongly Polynomial}
\date{}

\author{Bento Natura}
\affil{Columbia University}

\begin{document}

\maketitle

\begin{abstract}
We prove a strongly polynomial bound on the circuit diameter of polyhedra, resolving the circuit analogue of the polynomial Hirsch conjecture. Specifically, we show that the circuit diameter of a polyhedron $P = \{x\in \R^n:\, \mA x = b, \, x \ge \0\}$ with $\mA\in\R^{m\times n}$ is $O(m^2 \log m)$. Our construction yields monotone circuit walks, giving the same bound for the monotone circuit diameter. 

The circuit diameter, introduced by Borgwardt, Finhold, and Hemmecke (SIDMA 2015), is a natural relaxation of the combinatorial diameter that allows steps along circuit directions rather than only along edges. All prior upper bounds on the circuit diameter were only weakly polynomial. Finding a circuit augmentation algorithm that matches this bound would yield a strongly polynomial time algorithm for linear programming, resolving Smale's 9th problem.
\end{abstract}

\section{Introduction}

Linear programming is a cornerstone of modern optimization, with applications spanning operations research, economics, machine learning, and algorithm design. The simplex method, introduced by Dantzig in 1947, remains a workhorse algorithm for solving linear programs in practice, despite having exponential worst-case complexity. Understanding the geometry underlying the simplex method's remarkable empirical performance has been a central pursuit in optimization theory for over seventy years.

At the heart of this question lies the \emph{combinatorial diameter} of a polyhedron: the diameter of its vertex-edge graph, measuring the maximum number of edge steps needed to traverse between any pair of vertices. This quantity directly governs the worst-case iteration complexity of simplex-type algorithms. In 1957, Warren M. Hirsch conjectured that the combinatorial diameter of a $d$-dimensional polytope with $f$ facets is at most $f-d$, a bound that would elegantly explain why the simplex method rarely requires many pivots in practice. Hirsch's conjecture became one of the most celebrated open problems in polyhedral combinatorics, influencing decades of research in discrete geometry and optimization.

The conjecture remained open for over half a century. In a landmark result, Santos \cite{santos2012} disproved it in 2012, constructing a counterexample with diameter exceeding the Hirsch bound. However, Santos's counterexample still has polynomial diameter, leaving the more fundamental question unresolved: the \emph{polynomial Hirsch conjecture}, which asks whether the combinatorial diameter of every polytope admits a polynomial bound $\poly(f,d)$, remains the central open problem in this area. This question is not merely of geometric interest: a positive answer would fundamentally advance our understanding of why pivot algorithms perform well, while a negative answer would reveal inherent limitations of the simplex method and related approaches.

Current progress on the polynomial Hirsch conjecture remains limited. Kalai and Kleitman \cite{KK1992} established the first quasipolynomial bound; subsequent improvements have refined the exponent \cite{Sukegawa2017}, but no polynomial bound is known for general polytopes. Polynomial bounds have been proven only in restricted cases: Dyer and Frieze \cite{Dyer1994} settled the conjecture for totally unimodular matrices, and polynomial bounds in terms of the maximum subdeterminant $\Delta$ have been established for integer constraint matrices \cite{Bonifas2014,brunsch2013,eisenbrand2017,dadush2016shadow}. After 65 years of effort, the polynomial Hirsch conjecture for general polyhedra remains tantalizingly open.

\subsection{Circuit Diameter: A Natural Geometric Relaxation}

Given the difficulty of the combinatorial diameter problem, it is natural to study relaxations that may be more tractable while still capturing essential geometric properties of polyhedra. Borgwardt, Finhold, and Hemmecke \cite{Borgwardt2015} introduced the \emph{circuit diameter} as such a relaxation, generalizing edge directions to \emph{circuit directions}, which encompass all possible edge directions across all choices of the right-hand side vector.

To define circuit diameter precisely, consider a polyhedron in standard equality form
\begin{equation}\label{sys:polytope}\tag{P}
P=\{\,x\in \R^n: \mA x=b, x\ge \0\,\}
\end{equation}
for $\mA\in \R^{m\times n}$, $b\in \R^m$, with $\rk(\mA)= m$. An \emph{elementary vector} in $\ker(\mA)$ is a support-minimal nonzero vector $g\in \ker(\mA)$: no $h\in \ker(\mA)\setminus\{\0\}$ satisfies $\supp(h)\subsetneq \supp(g)$. A \emph{circuit} is the support of an elementary vector; these are precisely the circuits of the linear matroid defined over the columns of $\mA$. We denote by $\EE(\mA)\subseteq \ker(\mA)$ and $\circuits(\mA)\subseteq 2^{[n]}$ the sets of elementary vectors and circuits, respectively. We remark that many papers on circuit diameter \cite{borgwardt2018circuit,Borgwardt2016-hierarchy,Borgwardt2015,Borgwardt2018,Kafer2019} refer to elementary vectors as circuits; we follow the traditional convention of \cite{Fulkerson67,Rockafellar67,Lee89}.

Elementary vectors naturally generalize edge directions. Every edge direction of $P$ is an elementary vector, and conversely, the set $\EE(\mA)$ equals the set of all possible edge directions of polyhedra of the form \eqref{sys:polytope} as $b$ varies over $\R^m$ \cite{Sturmfels1997}. Thus, elementary vectors capture the full geometric richness of the constraint matrix $\mA$, independent of any particular right-hand side.

A \emph{circuit walk} is a sequence $x^{(0)},x^{(1)},\ldots,x^{(k)}$ in $P$ where each step $x^{(i+1)}=x^{(i)}+\alpha^{(i)} g^{(i)}$ uses an elementary vector $g^{(i)}\in \EE(\mA)$ with $\alpha^{(i)} > 0$ chosen maximally: $x^{(i)}+\alpha' g^{(i)}\notin P$ for any $\alpha'>\alpha^{(i)}$. The \emph{circuit diameter} of $P$ is the maximum, over all pairs of vertices $x,y\in P$, of the length of a shortest circuit walk from $x$ to $y$. Note that circuit walks are non-reversible due to the maximal step requirement; this asymmetry is a key technical challenge.

Circuit directions arise naturally in classical combinatorial optimization algorithms. Many fundamental algorithms for network flows and transportation problems, including the network simplex method and minimum-cost flow algorithms, are circuit augmentation algorithms \cite{Bland76,DHL15}. Thus, understanding circuit walks is not merely a geometric abstraction, but directly relevant to algorithmic practice.

The \emph{circuit diameter conjecture}, formulated in \cite{Borgwardt2015}, asserts that the circuit diameter of a $d$-dimensional polyhedron with $f$ facets is at most $f-d$. For $P$ in the form \eqref{sys:polytope}, where $d=n-m$ and the number of facets is at most $n$, this conjectured bound is $m$. While this strong form of the conjecture remains open, our focus is on the weaker but more fundamental \emph{polynomial circuit diameter conjecture}: whether circuit diameter is bounded by $\poly(m,n)$.

\paragraph{The circuit diameter conjecture as a central open problem.}

Since its introduction, the circuit diameter conjecture has attracted significant attention in the polyhedral combinatorics and optimization communities, appearing prominently throughout the literature \cite{borgwardt2018circuit,black2025counterexamples,Dadush2024,BGKLS2025,black2025shortcircuitwalksfixed,Kafer2019,NS2024,Wulf2025,SY2015,BFH2016network,BV2022,Ekbatani2021}. Black, Borgwardt, and Brugger~\cite{black2025counterexamples} describe it as ``the main open problem in this area''; Borgwardt, Grewe, Kafer, Lee, and Sanit\`{a}~\cite{BGKLS2025} note that ``a resolution of the circuit diameter conjecture would give insight to the reason the Hirsch conjecture does not hold''.

\subsection{Our Results: The First Strongly Polynomial Bound}

The main contribution of this paper is the first strongly polynomial bound on the circuit diameter of polyhedra, settling the polynomial circuit diameter conjecture.

\begin{theorem}\label{thm:intro-main2}
The circuit diameter of a polyhedron of the form \eqref{sys:polytope} with $\mA\in\R^{m\times n}$ is $O(m^2 \log m)$.
\end{theorem}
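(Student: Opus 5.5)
Fix vertices $x,y$ of $P$. We need a circuit walk from $x$ to $y$ of length $O(m^2\log m)$. The natural starting point is the conformal decomposition $y-x=\sum_{j} \lambda_j g^{(j)}$. Here each $g^{(j)}\in\EE(\mA)$ is sign-compatible with $y-x$, $\lambda_j>0$, and there are at most $n-m$ terms (Carathéodory for the conformal cone). Any partial sum of conformal steps stays in $P$, because along a conformal direction each coordinate moves monotonically between $x_i$ and $y_i$. The difficulty is that circuit walk steps must be \emph{maximal}, so applying $g^{(j)}$ with step $\lambda_j$ is illegal unless that step happens to hit a bound. Since $x$ and $y$ are vertices, $|\supp(x)\cup\supp(y)|\le 2m$. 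We therefore restrict to the face $\{z\in P: z_i=0 \ \forall i\notin \supp(x)\cup\supp(y)\}$. On this face all relevant circuits live among at most $2m$ columns, so $n$ effectively becomes $O(m)$.

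The plan is an iterative ``shrinking support'' scheme. Maintain a current point $z\in P$ and the target $y$, and define the remaining difference $d=y-z$. A maximal step in a conformal elementary direction $g$ of $d$ either reaches a coordinate where $z_i$ hits $y_i$ or hits the boundary $z_i=0$. Pure conformal steps cannot overshoot zero on coordinates with $y_i>0$, but they may fail to be maximal. So we work with coordinates $i$ with $y_i=0$, i.e.\ the set $N=\supp(x)\setminus\supp(y)$, which has size at most $m$. The key idea is to choose the circuit to maximize a weighted progress on $N$: for instance, pick $g$ conformal to $d$ so that the maximal step drives some coordinate of $N$ to zero, possibly overshooting the decomposition on the other coordinates but staying in $P$. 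After at most $|N|\le m$ ``phases'', $z$ lies on the face $\{z_N=0\}$. On that face $y$ is the unique point with given support structure, and a final conformal decomposition finishes with steps that are automatically maximal.

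To obtain a strongly polynomial count, each phase will be analyzed through a potential. One candidate is $\Phi(z)=\sum_{i\in N} z_i$, or better the ratio profile $z_i/x_i$ on $N$. We would show that within $O(m\log m)$ maximal conformal steps some new coordinate of $N$ becomes permanently zero, or is permanently below a threshold $\thresh$ that forces it to zero at the next step. This resembles the proximity/``variable fixing'' arguments of strongly polynomial interior point and minimum-cost flow methods. Concretely, choose the circuit greedily as the one with the largest step-length contribution in the conformal decomposition of $d$ restricted to $N$. Then the $\ell_1$ mass $\Phi$ on $N$ drops by a factor $(1-1/O(m))$ per step. After $O(m\log m)$ steps, $\Phi$ has shrunk by a factor $\mathrm{poly}(m)^{-1}$. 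At that point a proximity lemma shows that the largest remaining coordinate of $N$ is $\ge \Phi/m$ while some other coordinate is forced to zero, which we fix. Summing over $m$ phases gives $O(m^2\log m)$, and the final clean-up adds only $O(m)$ steps.

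The main obstacle is the proximity step: showing that geometric decay of $\Phi$ forces a coordinate to become zero \emph{permanently}, despite later steps that may raise it again, all without any dependence on bit sizes or subdeterminants. I would attempt this via a monotonicity invariant: every step is sign-compatible with $y-z$, so coordinates in $N$ never increase, which makes ``permanently'' automatic. The remaining work is then a combinatorial argument that a geometric decrease of $\Phi$ by a $\mathrm{poly}(m)$ factor must reduce the support of $z_N$. I expect this to require a lemma comparing circuit steps on the $O(m)$-column restricted matroid, in the spirit of Tardos-type arguments.
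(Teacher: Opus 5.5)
\textbf{There is a genuine gap.} Your setup coincides with the easy half of the paper's argument: you restrict to $\supp(x)\cup\supp(y)$ so the kernel has dimension at most $m$, decompose the remaining difference conformally, pick the circuit with the largest (weighted) $\ell_1$ progress on $N$, and use $\alpha\ge 1$ to get a $(1-1/m)$ decay per step. The gap is exactly the step you flag as the main obstacle, and the route you propose for it would not give a strongly polynomial bound.

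A maximal step along a circuit conformal to $y-z$ is in general not blocked by a coordinate of $N$. Since $\alpha$ may exceed $1$, a basic coordinate with $z_i>y_i$ can hit zero first, and it then grows back in later steps. Consequences:
\begin{itemize}
\item Nothing guarantees that a circuit ``driving some coordinate of $N$ to zero'' exists among the conformal circuits.
\item A potential on $N$ alone does not record the steps blocked by basic coordinates.
\item Geometric decay of $\Phi$ by a $\poly(m)$ factor does not by itself shrink $\supp(z_N)$; your plan gives no threshold depending only on $m$ below which an $N$-coordinate must vanish.
\end{itemize}
Tardos-type proximity lemmas supply such a threshold only in terms of subdeterminants or $\kappa(\mA)$, which is how the earlier $O(m^2\log(m+\kappa(\mA)))$ bound arises. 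Also, you cannot simply ``fix'' a small coordinate at zero: you must exhibit a maximal step along an elementary vector that zeros it.

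The paper closes this gap with two ingredients your plan lacks.
\begin{itemize}
\item \emph{A progress measure on the basic side.} The trapped set $T=\{j\in B: x_j\le m x_j^*\}$ never shrinks, by conformality and the step-length bound $\alpha\le m$. Its growth counts as progress and resets the reference point $x^{(\refidx)}$ against which $N$ is measured.
\item \emph{An elimination step that does not aim at $x^*$.} Once $\rho=\max_{j\in N}x_j/x^{(\refidx)}_j\le\thresh=(2m)^{-3}$, attained at $q$, it extrapolates to $y=x+\frac{\rho}{1-\rho}(x-x^{(\refidx)})$, which has $y_N\le\0$ and $y_q=0$. It then pulls back to $z=y+\lambda(x^*-y)$ with $\lambda=(2m)^{-2}$. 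Finally it augments along the circuit of a conformal decomposition of $z-x$ that maximizes $-g_q$.
\end{itemize}
This choice gives a relative decrease of at least $1/m$ on $q$. Every trapped coordinate either does not decrease (if $x_t\le x_t^*/2$, since then $z_t\ge x_t$) or decreases relatively by less than $1/m$. Hence the maximal step is blocked by a coordinate of $N$, which is then gone for good since all steps have $g_N\le\0$, or by a coordinate of $B\setminus T$, so $T$ grows. This yields at most $2m$ progress events separated by $O(m\log m)$ norm-reduction steps. An auxiliary target of this kind is the idea your proposal is missing.
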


This result represents a significant departure from all prior work on circuit diameter. Previous upper bounds fall into two categories: (i) bounds derived by analyzing the iteration complexity of specific circuit augmentation algorithms, which inherit dependencies on numerical properties of the input, and (ii) bounds obtained by designing circuit augmentation schemes that imitate known algorithms, yielding circuit diameter bounds that approximately match the iteration counts of those algorithms. Both approaches inherit the limitations of these algorithms: they are either strongly polynomial only on highly structured polytopes, depend on encoding size, or depend on condition numbers of the constraint matrix.

In contrast, our bound of $O(m^2 \log m)$ depends \emph{only} on $m$, the number of constraints. This is the first unconditional geometric result on circuit diameter: it reveals an intrinsic property of the polyhedron's combinatorial structure, independent of how the constraint matrix is represented numerically. Our result conclusively establishes that short circuit walks exist between any pair of vertices, with length governed solely by the dimension of the ambient space.

\paragraph{Strongly polynomial versus weakly polynomial complexity.} The distinction between strongly and weakly polynomial bounds is fundamental in optimization. A bound is \emph{weakly polynomial} if it depends on the bit-complexity of the input (e.g., logarithms of numerical values or condition numbers), and \emph{strongly polynomial} if it depends only on combinatorial parameters such as the dimensions of the constraint matrix. Strongly polynomial bounds reveal that a problem's complexity is governed by its combinatorial structure rather than numerical artifacts of representation.

In linear programming, this distinction is central. Both Khachiyan's ellipsoid method \cite{Khachiyan79} and interior point methods \cite{Karmarkar84} only run in weakly polynomial time. Finding a strongly polynomial algorithm for LP remains a major open problem, listed by Smale \cite{Smale98} as one of the key mathematical challenges for the 21st century (Smale's 9th problem). Our result on circuit diameter is, to our knowledge, the first strongly polynomial bound on any diameter measure for general polyhedra.

\paragraph{Existence versus Computation.} Our construction is algorithmic: given two vertices, a short circuit walk between them can be computed in strongly polynomial time. However, this does not yield a strongly polynomial algorithm for linear programming, since computing an optimal vertex is itself the problem we wish to solve. The challenge is to find short circuit walks toward an optimum without knowing the target vertex in advance. Our result shows that no geometric obstruction prevents such walks from existing; the problem of finding short circuit walks is purely algorithmic.

Beyond settling the polynomial circuit diameter conjecture, our result suggests that polyhedra possess fundamentally good geometric structure. The focus shifts toward understanding what structure might enable efficient navigation without knowing the target: perhaps randomized or approximate methods that find near-optimal walks with high probability, or augmentation rules guided by the geometric insights underlying our proof.

\subsection{Related Work on Circuit Diameter}

We now survey related work on circuit diameter bounds, organizing by methodology.

\paragraph{Circuit augmentation algorithms and condition number bounds.} The \emph{circuit imbalance} $\kappa(\mA)$ (defined in \Cref{sec:prelim}) measures the maximum ratio between entries of an elementary vector. This parameter equals $1$ if and only if $\mA$ admits a totally unimodular representation \cite{Camion1965,Ekbatani2021}. Ekbatani et al.\ \cite{Ekbatani2021} showed that a natural extension of the Goldberg--Tarjan minimum-mean cycle canceling algorithm \cite{Goldberg89} yields a steepest-descent circuit augmentation algorithm with iteration complexity $O(n^2 m\kappa(\mA) \log(\kappa(\mA) + n))$; see also Gauthier and Desrosiers \cite{Gauthier2021}.

Dadush et al.\ \cite{Dadush2024} improved this to $O(m^2 \log(m+\kappa(\mA)))$, giving the first bound with only logarithmic dependence on $\kappa(\mA)$. Their key innovation is a ``shoot towards the optimum'' scheme: rather than greedily improving the objective, the algorithm moves directly toward a known optimal vertex $x^*$, using circuit directions that make geometric progress toward $x^*$. This more global strategy reduces dependence on local condition numbers, but still requires logarithmic dependence on the circuit imbalance.

\paragraph{Interior point methods and straight-line complexity.} Allamigeon et al.\ \cite{Allamigeon2025} introduced \emph{straight-line complexity} (SLC), a geometric measure that quantifies the curvature of trajectories in logarithmic coordinates. They show that this measure provides, up to polynomial factors in $n$, an upper bound on the number of iterations their path-following interior-point methods require. In very recent work, Dadush, Kober, and Koh \cite{dadush2026circuitdiameterstraightline} further established that SLC also bounds circuit augmentation complexity: by tracing the central path and decomposing it into ``polarized segments'', they obtain bounds of the form $O(n^2 \sum_{i\in[n]} \operatorname{SLC}(x_i^\mathfrak{m}))$.

While SLC is a natural geometric quantity, it can be exponential in the problem dimension \cite{allamigeon2018}, so this bound is not strongly polynomial. 

\paragraph{Hardness of approximation.} Black, N\"obel, and Steiner \cite{black2025shortcircuitwalksfixed} showed that approximating the shortest monotone circuit walk between two vertices (a variant where steps must not increase the objective) within a factor of $O(m^{1-\varepsilon})$ is NP-hard for any $\varepsilon>0$. This shows that efficiently computing even approximately optimal circuit walks is computationally intractable.

\paragraph{Special polytopes.} Constant and linear circuit diameter bounds have been established for specific polytopes. Borgwardt, Finhold, and Hemmecke \cite{Borgwardt2015} proved such bounds for dual transportation polyhedra. Kafer, Pashkovich, and Sanit{\`a}~\cite{Kafer2019} extended these results to matching polytopes, traveling salesman polytopes, and fractional stable set polytopes, using problem-specific structure. Borgwardt, De Loera, and Finhold~\cite{Borgwardt2016-hierarchy} introduced several variants of circuit diameter and explored their relationships.

\paragraph{Comparison of bounds.} The following table summarizes key circuit diameter bounds for general polyhedra, highlighting their dependencies:

\begin{center}
\begin{tabular}{lcc}
\toprule
\textbf{Reference} & \textbf{Bound} & \textbf{Depends on} \\
\midrule
De Loera et al.\ \cite{DKS2022} & $\operatorname{poly}(m,n,\operatorname{size}(\mA,b,c))$ & $\operatorname{size}(\mA,b,c)$ \\
Ekbatani et al.\ \cite{Ekbatani2021} & $O(n^2 m\kappa(\mA) \log(\kappa(\mA)+n))$ & $\kappa(\mA)$ \\
Dadush et al.\ \cite{Dadush2024} & $O(m^2 \log(\kappa(\mA) + m))$ & $\kappa(\mA)$ \\
Dadush et al.\ \cite{dadush2026circuitdiameterstraightline} & $O(n^2 \sum_i \operatorname{SLC}(x_i^\mathfrak{m}))$ & SLC \\
\textbf{This paper} & $\bm{O(m^2 \log m)}$ & \textbf{-} \\
\bottomrule
\end{tabular}
\end{center}
Note that $\operatorname{SLC}$ is upper bounded by $\mathrm{poly}(n)\log(n + \kappa(\mA))$ as shown in \cite{DadushKNOV24}.

Our work is the first to eliminate all dependence on numerical properties, achieving a purely combinatorial bound.

\subsection{Paper Organization}

The remainder of this paper is organized as follows. \Cref{sec:prelim} establishes notation and reviews necessary background on elementary vectors, circuits, and conformal decompositions. \Cref{sec:bound} contains our main result.

\section{Preliminaries} \label{sec:prelim}
Let $[n]=\{1,2,\ldots,n\}$.
For $\alpha\in\R$, we denote $\alpha^+=\max\{0,\alpha\}$ and $\alpha^-=\max\{0,-\alpha\}$. 
For a vector $z \in \R^n$, we define $z^+,z^-\in\R^n$ as $(z^+)_i=(z_i)^+$, $(z^-)_i=(z_i)^-$ for $i\in [n]$. 
For $z\in\R^n$, we let $\supp(z)=\{i\in [n]: z_i\neq 0\}$ denote its support, and $1/z\in(\R\cup\{\infty\})^n$ denote the vector $(1/z_i)_{i\in [n]}$. Throughout, we use the convention that $0/0 = 0$ for convenience.
We use $\|\cdot\|_p$ to denote the $\ell_p$-norm. We denote by $\R^n_+$ the non-negative orthant $\{x \in \R^n : x \ge \0\}$.

For technical reasons we assume that $m \ge 2$. For $m = 1$, our main statements are all trivial.
Note that every circuit has size at most $m+1$ since $\rk(\mA) \le m$.
The \emph{circuit imbalance measure} of $\mA$ is defined as
\[\kappa(\mA) \coloneqq \max_{g\in \EE(\mA)} \set{\frac{|g_i|}{|g_j|}:i,j\in \supp(g)}\, .\]
For $P$ as in \eqref{sys:polytope}, $x \in P$ and an elementary vector $g \in \EE(\mA)\setminus \R^n_+$, we let $\aug_P(x, g) \coloneqq x + \alpha g$ where $\alpha = \max\{\bar \alpha : x + \bar \alpha g \in P\}$. 

\begin{definition}[{\cite{bookDeLoera}}]\label{def:conformal}
  We say that $x,y\in\R^n$ are \emph{sign-compatible} if $x_iy_i\geq 0$ for all $i\in[n]$. %
  We write $x\sqsubseteq y$ if they are sign-compatible and further $|x_i|\le|y_i|$ for all $i\in [n]$.  
  For $x\in \ker(\mA)$, a \emph{conformal circuit decomposition} of $x$ is a set of elementary vectors $h^{(1)},h^{(2)},\dots,h^{(k)}$ in $\ker(\mA)$ such that $x=\sum_{j=1}^k h^{(j)}$, $k\le n-m$, and $h^{(j)}\sqsubseteq x$ for all $j\in [k]$.
\end{definition}

The following lemma shows that every vector in a linear space has a conformal circuit decomposition. It is a simple corollary of the Minkowski--Weyl and Carath\'eodory theorems.
\begin{lemma}[\cite{Dadush2024}]\label{lem:conformal}
For a matrix $\mA \in \R^{m \times n}$, every $x\in \ker(\mA)$ has a conformal circuit decomposition $x = \sum_{j=1}^k h^{(j)}$ such that $k \le \min\{\dim(\ker(\mA)), |\supp(x)|\}$.
\end{lemma}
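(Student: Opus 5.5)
The plan is to derive the lemma from a cone decomposition followed by a Carathéodory reduction. Let $\sigma\in\{-1,0,1\}^n$ be the sign pattern of $x$, and consider the orthant piece $C_\sigma=\{z\in\ker(\mA): \sigma_i z_i\ge 0 \ \forall i,\ z_i=0 \ \forall i\notin\supp(x)\}$. This is a pointed polyhedral cone: it lies in a single orthant of the coordinate subspace on $\supp(x)$, so it contains no line. By Minkowski--Weyl, $C_\sigma$ is generated by its extreme rays. Since $x\in C_\sigma$, Carathéodory's theorem writes $x$ as a nonnegative combination of at most $\dim(C_\sigma)$ extreme rays. Each such term is sign-compatible with $x$ and, being a summand of nonnegative sign-compatible vectors, satisfies $h^{(j)}\sqsubseteq x$.

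The key step is to show that every extreme ray $r$ of $C_\sigma$ is an elementary vector of $\ker(\mA)$. Suppose not. Then there is a nonzero $h\in\ker(\mA)$ with $\supp(h)\subsetneq\supp(r)$. Choose $t\neq 0$ maximal in absolute value such that $r\pm t h$ stays sign-compatible with $\sigma$; a suitable such $t$ exists because $\supp(h)\subseteq\supp(r)$ and all coordinates of $r$ on $\supp(r)$ are strictly signed. Then $r=\tfrac12(r+th)+\tfrac12(r-th)$ with both terms in $C_\sigma$. At the extremal $t$, one term gains a new zero coordinate, so it is not a multiple of $r$. This contradicts extremality. Hence extreme rays are support-minimal, that is, elementary vectors.

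It remains to bound $k$. Carathéodory gives $k\le\dim(C_\sigma)$. On the one hand $C_\sigma\subseteq\ker(\mA)$, so $\dim(C_\sigma)\le\dim(\ker(\mA))$. On the other hand $C_\sigma\subseteq\R^{\supp(x)}$, so $\dim(C_\sigma)\le|\supp(x)|$. Together these give the stated bound $k\le\min\{\dim(\ker(\mA)),|\supp(x)|\}$, and rescaling the rays absorbs the coefficients. Since $\rk(\mA)=m$, this also gives $k\le n-m$, which matches \Cref{def:conformal}.

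The main obstacle I expect is the extreme-ray characterization, specifically arguing carefully that the perturbation $r\pm th$ yields two vectors in $C_\sigma$ that are not parallel to $r$. If that argument becomes delicate, the fallback is an induction on $|\supp(x)|$. Pick an elementary vector $g$ with $\supp(g)\subseteq\supp(x)$, scaled so that $g\sqsubseteq x$ with equality in some coordinate. Then $x-g$ is sign-compatible with $x$ and has strictly smaller support, and we recurse. This only yields $k\le|\supp(x)|$, so the Carathéodory route is still needed for the $\dim(\ker(\mA))$ bound.
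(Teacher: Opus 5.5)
Your proof is correct and follows the route the paper points to: the paper does not prove the lemma but cites it from Dadush et al.\ as a simple corollary of Minkowski--Weyl and Carath\'eodory, which is what you carry out by showing that the extreme rays of the pointed cone $\ker(\mA)$ intersected with the closed orthant of $x$ (restricted to $\supp(x)$) are elementary vectors and then bounding the number of terms by the cone's dimension. One loose point is in your fallback: there, an elementary vector with $\supp(g)\subseteq\supp(x)$ need not be sign-compatible with $x$, so the induction as stated would need a separate existence argument, but your main argument does not depend on it.
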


\begin{lemma}[\cite{DadushNV20}]
A conformal circuit decomposition for a vector $x \in \ker(\mA)$ can be found in strongly polynomial time.
\end{lemma}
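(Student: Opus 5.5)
The plan is a greedy peeling procedure: repeatedly extract an elementary vector that is conformal to the current residual, subtract the largest multiple that keeps the residual conformal, and finally apply a Carath\'eodory-type reduction to meet the bound on the number of terms. Throughout, every operation will be Gaussian elimination on submatrices of $\mA$ or a ratio test on entries. This keeps both the arithmetic operation count and the encoding sizes polynomial.

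\textbf{Step 1 (finding a conformal elementary vector).} Given $0 \neq y \in \ker(\mA)$, we find $g \in \EE(\mA)$ with $g \sqsubseteq y$ up to scaling, as follows. Set $S = \supp(y)$ and compute $\dim \ker(\mA_S)$ by Gaussian elimination, where $\mA_S$ denotes the submatrix of $\mA$ with columns indexed by $S$.
\begin{itemize}
\item If the dimension is $1$, then $S$ is a circuit and $y$ itself is elementary.
\item Otherwise, pick $z \in \ker(\mA_S)$ (extended by zeros outside $S$) that is not parallel to $y$. Since $z$ is not parallel to $y$, $z_i \neq 0$ for some $i \in S$; replacing $z$ by $-z$ if needed, assume $z_i/y_i < 0$ for some $i$, so that $\bar t = \min\{-y_i/z_i : z_i/y_i < 0\}$ is well defined and positive. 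Set $y' = y + \bar t z$. Then $y' \in \ker(\mA)$ and $y' \sqsubseteq y$: coordinates with $z_i/y_i \ge 0$ only grow in absolute value, so the sign-compatible bound holds after the rescaling used below. Moreover $\supp(y') \subsetneq S$, and $y' \neq 0$ because $y$ and $z$ are not parallel.
\end{itemize}
What we actually need is only that $y'$ is sign-compatible with $y$ and has strictly smaller support. This holds because every coordinate moves monotonically away from zero or stops exactly at zero at time $\bar t$. After at most $|S|$ iterations the support $C$ is a circuit.

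We then recompute $g$ directly from $C$ as the (unique up to scale) kernel vector of $\mA_C$, for instance via Cramer's rule on a maximal independent subset of rows. We orient $g$ sign-compatibly with $y$. Recomputing $g$ from $C$ rather than keeping $y'$ avoids accumulating numbers across iterations, so each $g$ has encoding size polynomial in that of $\mA$.

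\textbf{Step 2 (peeling).} Starting with $y = x$, take $g$ from Step~1 and set
\[
\alpha = \min_{i \in \supp(g)} y_i / g_i > 0 .
\]
Define $h = \alpha g$ and replace $y$ by $y - h$. By construction $h \sqsubseteq y$ and $y - h \sqsubseteq y$, and the support of $y$ drops by at least one. Since $\sqsubseteq$ is transitive, every extracted $h^{(j)}$ satisfies $h^{(j)} \sqsubseteq x$. After at most $|\supp(x)|$ rounds the residual is $\0$, so $x = \sum_j h^{(j)}$.

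\textbf{Step 3 (reducing the count).} To also guarantee $k \le \dim\ker(\mA)$, we write $x = \sum_j \lambda_j g^{(j)}$ with $\lambda_j > 0$. While the vectors $g^{(j)}$ are linearly dependent:
\begin{itemize}
\item find $\mu \neq 0$ with $\sum_j \mu_j g^{(j)} = 0$ and at least one $\mu_j > 0$;
\item replace $\lambda$ by $\lambda - \theta \mu$, where $\theta = \min\{\lambda_j/\mu_j : \mu_j > 0\}$.
\end{itemize}
This keeps all coefficients nonnegative, zeroes out at least one of them, and preserves conformality, since each $g^{(j)}$ is sign-compatible with $x$. At the end the $g^{(j)}$ are linearly independent vectors in $\ker(\mA)$, so $k \le \dim\ker(\mA)$, and also $k \le |\supp(x)|$ from Step~2.

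\textbf{Main obstacle.} The step needing the most care is the strongly polynomial accounting rather than correctness. We must argue that the number of arithmetic operations is polynomial in $n$ and $m$ (at most $n$ rounds in each of Steps~1--3, each consisting of one Gaussian elimination or ratio test) and that intermediate numbers stay polynomially bounded. The latter holds because every elementary vector is recomputed from its support via subdeterminants of $\mA$, and because the $\alpha$'s and $\lambda$'s arise as solutions of linear systems whose sizes are polynomially bounded in the input.
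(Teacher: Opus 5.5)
Your proposal is correct. The paper gives no proof of this lemma; it imports it from Dadush--Natura--V\'egh. What you supply is a self-contained version of the standard argument: find a sign-compatible circuit by support reduction inside $\ker(\mA_S)$, peel off the maximal multiple, and repeat. Three points should be tightened.

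\textbf{The claim $y'\sqsubseteq y$ in Step~1 is false as stated.} Coordinates with $z_i/y_i>0$ grow in absolute value. What does hold is that $y'$ agrees in sign with $y$ on $\supp(y')\subsetneq\supp(y)$. This relation is transitive along the iterations, and it is all you need, since Step~2 rescales $g$ anyway.

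\textbf{Step~3 never fires.} Let $e_j$ be a coordinate zeroed in round $j$ of Step~2. Then $g^{(j)}_{e_j}\neq 0$, while $g^{(l)}_{e_j}=0$ for every $l>j$, because later circuits are supported inside later residual supports. So the extracted circuits are triangular on $e_1,\dots,e_k$, hence linearly independent. This gives $k\le\min\{\dim\ker(\mA),|\supp(x)|\}$ directly from the peeling.

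\textbf{The bit-size remark needs that independence.} The peeling coefficients are the unique solution of the nonsingular triangular system $\sum_j\alpha_j g^{(j)}_{e_l}=x_{e_l}$ for $l\in[k]$. Cramer's rule therefore bounds them, and hence the residuals. The iterates $y'$ inside Step~1 are not recomputed, so they need a separate argument: each update $y\mapsto y+\bar t z$, with $z$ a freshly computed kernel vector of bounded size, increases the encoding length only additively. At most $n$ such updates keep it polynomial.

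In the real-arithmetic setting of the paper ($\mA\in\R^{m\times n}$), only the operation count matters. Your bound of $O(n)\times O(n)$ Gaussian eliminations already settles that.
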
 

\section{A strongly polynomial bound}
\label{sec:bound}

In this section we prove our main result, a strongly polynomial bound on the circuit diameter of polyhedra defined by a system of linear equations and non-negativity constraints.

\begin{theorem}
\label{thm:main-theorem}
Let $P = \{x : \mA x = b, x \ge \0\}$ be a polyhedron with $\mA \in \R^{m \times n}$. Then, for any feasible $x \in P$ and any vertex $v \in P$ there exists a circuit walk from $x$ to $v$ of length $n + O(m^2 \log(m))$. Furthermore, such a circuit walk can be constructed in strongly polynomial time.
\end{theorem}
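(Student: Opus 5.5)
The idea is to follow the ``shoot towards the target'' framework of Dadush et al.~\cite{Dadush2024}, but to replace their $\kappa(\mA)$-dependent potential with a purely combinatorial progress measure. Let $N=\supp(v)$, so $|N|\le m$ because $v$ is a vertex, and let $Z=[n]\setminus N$ be the coordinates that must reach $0$.

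\textbf{Phase 1.} I would reach a point whose support is small, at the cost of the additive $n$ term. While $x$ is not a vertex, take any elementary vector $g$ with $\supp(g)\subseteq\supp(x)$ and augment with $\aug_P(x,g)$ (or with $-g$ if $g\ge\0$). Each such step zeroes at least one coordinate and never creates new support, so after at most $n$ steps $x$ is a vertex and $|\supp(x)|\le m$. A more careful variant only zeroes coordinates in $Z$. In either case, afterwards only $O(m)$ coordinates are ``active'', namely $\supp(x)\cup N$, which has size at most $2m$.

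\textbf{Phase 2.} Now $|\supp(x)\cup N|\le 2m$ and $v-x\in\ker(\mA)$. By \Cref{lem:conformal}, take a conformal decomposition $v-x=\sum_{j\le k}h^{(j)}$ with $k\le 2m$. Every $h^{(j)}$ is feasible to add from $x$ with step length at least $1$, because $x+h^{(j)}\sqsubseteq$-lies between $x$ and $v$ coordinatewise. The standard step is to pick the $h^{(j)}$ of largest ``mass'' and augment maximally; the maximal step stays conformal to the remaining difference, or else overshoots and zeroes a coordinate of $Z$.

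In \cite{Dadush2024} the progress measure is the geometric decrease of $\|x-v\|$ restricted to $Z$, and the number of rounds depends on $\log\kappa$ because that is what is needed before a coordinate is ``certified'' to be driven to $0$. To remove this dependence, I would define scale classes: group the coordinates of $Z\cap\supp(x)$ by the magnitude of $x_i$ into buckets of ratio $\mathrm{poly}(m)$. The aim is to show that within $O(m\log m)$ steps, some coordinate either becomes permanently $0$ (and never re-enters the support, by sign-compatibility of later steps with $v-x$) or the number of ``large gaps'' between consecutive buckets changes monotonically. Since only $O(m)$ coordinates are in play, this gives $O(m)$ phases of $O(m\log m)$ steps each, hence $O(m^2\log m)$ steps in total.

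\textbf{Main obstacle.} The crux is the claim that after $O(m\log m)$ geometric-decrease steps some coordinate is fixed to zero without invoking $\kappa$. My first attempt would be a proximity argument: once $\|x_Z\|$ has shrunk by a factor $m^{O(1)}$ relative to the largest entry, some circuit in the decomposition has a large entry on a coordinate whose current value is small. That forces a maximal augmentation to zero it. When the circuits have wildly varying imbalances, I would handle them by choosing the augmenting direction as a suitable conic combination that is itself re-decomposed, so that ratios are compared only within one circuit, where they are bounded in terms of the circuit itself rather than globally.

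Strong polynomiality of the construction follows from the strongly polynomial conformal decomposition lemma~\cite{DadushNV20}, applied once per step.
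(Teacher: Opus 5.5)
\paragraph{Verdict: genuine gap at the crux.} Your Phase~1 is fine; it is a slightly stronger version of the paper's. A $\kappa$-free geometric decrease is also available: measure $\|x_Z/x^{(r)}_Z\|_1$ relative to a reference point $x^{(r)}$; the greedy circuit of a conformal decomposition of $v-x$ shrinks this by a factor $1-1/m$ per step.

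The gap is the step you yourself flag as the crux, and the mechanism you sketch cannot close it. As long as you only augment along circuits $h\sqsubseteq v-x$, the step is stopped by the coordinate maximizing $-h_i/x_i$. For $i\in Z$ this ratio is at most $1$. For $i\in\supp(v)$ with $x_i\gg v_i$ it can also be as large as $(x_i-v_i)/x_i\approx 1$. These are relative quantities determined by the decomposition of $v-x$. So making $\|x_Z\|$ small, whether absolutely or relative to $x^{(r)}$, or sorting coordinates into magnitude buckets, gives no control over which coordinate blocks. The blocker may therefore be a coordinate of $\supp(v)$. Zeroing it is not permanent and moves none of your potentials, and the ``number of large gaps between buckets'' is never shown to be monotone. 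Forcing a $Z$-coordinate to block via a proximity argument is exactly what costs $\log\kappa(\mA)$ in \cite{Dadush2024}. Comparing ratios ``within one circuit'' does not help, because the spread of entries inside a single circuit is precisely what $\kappa(\mA)$ measures.

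\paragraph{What the paper adds.} It supplies two ideas your outline lacks.

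\emph{A potential in which more events count as progress.} Fix a basis $B\supseteq\supp(v)$ and maintain the trapped set $T=\{t\in B: x_t\le m\,v_t\}$. Every step used has length $\alpha\le m$, which follows from the greedy choices and $k\le m$. Every step is also conformal to its target. Together these keep trapped coordinates trapped. Hence zeroing \emph{any} untrapped basic coordinate counts as progress, and there are at most $2m$ progress events ($|T|$ grows, or a $Z$-coordinate hits $0$ and stays there).

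\emph{The elimination step}, which aims not at $v$ but at an auxiliary point built from the trajectory. Once $\rho=\max_{j\in Z}x_j/x^{(r)}_j\le(2m)^{-3}$ with maximizer $q$, set $y=x+\frac{\rho}{1-\rho}(x-x^{(r)})$ and $z=y+\lambda(v-y)$ with $\lambda=(2m)^{-2}$. The kernel vector $x-x^{(r)}$ moves each trapped coordinate by at most $m v_t$, but moves each $Z$-coordinate by nearly its full reference value. Consequently:
\begin{itemize}
\item $z_Z\le\0$ and $z_q=0$;
\item $|z_t-x_t|\le(\lambda+\frac{\rho}{1-\rho})m\,v_t$ for every $t\in T$;
\item $z_t\ge x_t$ for every $t\in T$ with $x_t\le v_t/2$.
\end{itemize}
Now take the circuit $g$ of a conformal decomposition of $z-x$ that maximizes $-g_q$. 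Then $-g_q/x_q\ge 1/m$, while every $t\in T$ has $g_t\ge 0$ or $-g_t/x_t<1/m$. So the blocking coordinate lies in $Z$ or in $B\setminus T$, which is a progress event in a single step.

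Some target other than $v$, chosen so that it agrees closely with $x$ on $T$ yet zeroes $q$, is the missing ingredient. Without it your outline does not give a bound independent of $\kappa(\mA)$.
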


The following corollaries follow directly from \Cref{thm:main-theorem}.

\begin{corollary}
\label{cor:main-theorem-vertex-to-vertex}
Let $P = \{x : \mA x = b, x \ge \0\}$ be a polyhedron with $\mA \in \R^{m \times n}$. Then, its circuit diameter is upper bounded by $O(m^2 \log(m))$.
\end{corollary}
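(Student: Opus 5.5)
The plan is to apply \Cref{thm:main-theorem} with the starting point $x$ taken to be a vertex $u$ of $P$ and the target $v$ a vertex, and then remove the additive $n$ term. The issue is that \Cref{thm:main-theorem} bounds the walk length by $n + O(m^2\log m)$, while the corollary asks for $O(m^2\log m)$ independent of $n$. So first I reduce to an instance in which the number of relevant columns is $O(m)$.

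\textbf{Step 1: reduction to $O(m)$ columns.} Let $S=\supp(u)\cup\supp(v)$; since $u,v$ are vertices, $|\supp(u)|,|\supp(v)|\le m$, so $|S|\le 2m$. Consider the face $F=\{x\in P: x_i=0 \ \forall i\notin S\}$. This is the polyhedron $P_S=\{z\in\R^S:\mA_S z=b,\ z\ge\0\}$ embedded by padding with zeros. Both $u$ and $v$ are vertices of $F$.

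\textbf{Step 2: apply the theorem in the face.} After deleting redundant rows of $\mA_S$, the matrix has at most $\min\{m,2m\}$ rows and $|S|\le 2m$ columns. Applying \Cref{thm:main-theorem} to $P_S$ gives a circuit walk from $u$ to $v$ in $P_S$ of length $2m+O(m^2\log m)=O(m^2\log m)$.

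\textbf{Step 3: lift the walk back to $P$.} I need to check that this walk is a circuit walk in $P$.
\begin{itemize}
\item Every elementary vector of $\ker(\mA_S)$, padded with zeros, is an elementary vector of $\ker(\mA)$. Indeed, a support-minimal vector of the restricted kernel remains support-minimal in $\ker(\mA)$, because any vector of $\ker(\mA)$ with smaller support lies in the coordinates $S$ and hence in $\ker(\mA_S)$.
\item Each step length stays maximal in $P$. The padded direction has zero entries outside $S$, so those coordinates stay $0$ and impose no bound. The maximal step is therefore determined by the coordinates in $S$ alone, and it coincides with the maximal step in $P_S$.
\end{itemize}
Hence the walk is a valid circuit walk in $P$ of length $O(m^2\log m)$. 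Taking the maximum over all vertex pairs bounds the circuit diameter.

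\textbf{Main obstacle.} The only delicate point is the rank assumption $\rk(\mA_S)=m$ implicit in the setting, since $\mA_S$ may have lower rank. Removing dependent rows leaves the feasible set and the kernel unchanged, so the theorem applies with $m'\le m$ rows. A second subtlety is the degenerate case where the restricted kernel is trivial or the walk has length zero, which is immediate.
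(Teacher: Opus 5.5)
Your proposal is correct and follows the paper's proof: restrict to the at most $2m$ columns indexed by $\supp(u)\cup\supp(v)$ and apply \Cref{thm:main-theorem} to get a walk of length $2m + O(m^2\log m) = O(m^2\log m)$. The paper leaves implicit the checks you add: that padded elementary vectors of $\ker(\mA_S)$ stay elementary in $\ker(\mA)$, that maximal step lengths are unchanged, and that dropping redundant rows handles $\rk(\mA_S)<m$. All three are sound.
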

\begin{proof}
Any two vertices $u,v \in P$ correspond to basic feasible solutions of \eqref{sys:polytope} and hence have support of at most $m$ variables each. Reducing the system $\mA$ to the columns indexed by $\supp(u) \cup \supp(v)$, we get a polyhedron in at most $2m$ variables. By \Cref{thm:main-theorem}, there exists a circuit walk from $u$ to $v$ of length at most $2m + O(m^2 \log(m)) = O(m^2 \log(m))$, as desired.
\end{proof}

\begin{corollary}
\label{corollary:computing-circuit-diameter}
Let $P = \{x : \mA x = b, x \ge \0\}$ be a polyhedron with $\mA \in \R^{m \times n}$. Then, its circuit diameter can be approximated up to a factor $O(m \sqrt{\log(m)})$ in strongly polynomial time. Furthermore, a circuit walk of length $O(m^2 \log(m))$ between any two vertices can be constructed in strongly polynomial time.
\end{corollary}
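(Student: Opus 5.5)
The plan is to obtain both parts from \Cref{thm:main-theorem}, applied through the reduction in the proof of \Cref{cor:main-theorem-vertex-to-vertex}. The approximation then comes from sandwiching the circuit diameter between a trivial lower bound and the strongly polynomial upper bound, and outputting the geometric mean.

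\emph{Constructing the walk.} Let $u,v$ be vertices of $P$ and set $S=\supp(u)\cup\supp(v)$, so $|S|\le 2m$. Let $\mA_S$ be the submatrix of columns indexed by $S$, and let $P_S=\{z\in\R^S:\mA_S z=b,\ z\ge\0\}$. I first check that a circuit walk in $P_S$ lifts to one in $P$ by padding with zeros.
\begin{itemize}
\item \textbf{Elementary vectors lift.} For $g\in\EE(\mA_S)$, the padded vector $\hat g\in\R^n$ lies in $\ker(\mA)$. Any $h\in\ker(\mA)$ with $\supp(h)\subseteq\supp(\hat g)\subseteq S$ restricts to an element of $\ker(\mA_S)$. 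Hence support-minimality is preserved, and $\hat g\in\EE(\mA)$.
\item \textbf{Step lengths are unchanged.} Along the walk, all coordinates outside $S$ stay $0$ and are never moved by $\hat g$. So for $\alpha\ge 0$ we have $\hat z+\alpha\hat g\in P$ if and only if $z+\alpha g\in P_S$. The maximal step in $P$ therefore equals the maximal step in $P_S$.
\item \textbf{Endpoints correspond.} $u|_S$ is a vertex of $P_S$: its support columns are linearly independent, because $u$ is a vertex of $P$. The same holds for $v|_S$, and both are feasible in $P_S$.
\end{itemize}
Applying the algorithmic part of \Cref{thm:main-theorem} to $P_S$ with start point $u|_S$ and target vertex $v|_S$ gives, in strongly polynomial time, a circuit walk of length $|S|+O(m^2\log m)=O(m^2\log m)$. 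Forming $S$, restricting and padding are all strongly polynomial, so this proves the second claim.

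\emph{Approximating the diameter.} Let $D$ be the circuit diameter of $P$, and let $C$ be the constant with $D\le Cm^2\log m$ from \Cref{cor:main-theorem-vertex-to-vertex}. If $P$ has two distinct vertices, then $D\ge 1$, since any walk between them needs at least one step. The algorithm outputs
\[
E\coloneqq \sqrt{C}\,m\sqrt{\log m}.
\]
From $1\le D\le Cm^2\log m$ we get $E/D\le\sqrt{C}\,m\sqrt{\log m}$ and $D/E\le\sqrt{C}\,m\sqrt{\log m}$. So $E$ approximates $D$ within a factor $O(m\sqrt{\log m})$, and it is computed in constant arithmetic time.

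\emph{Main obstacle.} The only delicate point is the degenerate case $D=0$, which happens when $P$ is empty or has a single vertex. Here no multiplicative approximation is possible unless we output exactly $0$. Recognising this case amounts to deciding LP feasibility or uniqueness, which we cannot do in strongly polynomial time in general. I would therefore state the approximation for polyhedra with at least two vertices, which is the only case where a multiplicative guarantee is meaningful, or equivalently read it as approximating $\max\{D,1\}$. The lifting verification above is routine but is where care is needed. It also depends on the algorithmic part of \Cref{thm:main-theorem} permitting any feasible start point; this holds since $u|_S$ is feasible.
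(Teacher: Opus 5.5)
Your proposal is correct and matches the paper's argument. The walk comes from restricting to $\supp(u)\cup\supp(v)$ and applying \Cref{thm:main-theorem} exactly as in \Cref{cor:main-theorem-vertex-to-vertex}, and your output $E=\sqrt{C}\,m\sqrt{\log m}$ is precisely the paper's ``guess'' of $\Theta(m\sqrt{\log m})$, the geometric mean of the trivial lower bound $1$ and the $O(m^2\log m)$ upper bound. Your check that elementary vectors and maximal step lengths lift, and your caveat that the multiplicative guarantee needs $P$ to have at least two vertices, are sound refinements of points the paper leaves implicit; note only that recognizing the case $D=0$ is not \emph{known} to be possible in strongly polynomial time, rather than known to be impossible.
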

\begin{proof}
The second part of the corollary is trivial from \Cref{cor:main-theorem-vertex-to-vertex}. The first part is achieved by guessing a circuit diameter of $O(m\sqrt{\log(m)})$.
\end{proof}

Another important corollary concerns the running time to find a short circuit walk to an optimal vertex of a linear program. As our algorithm can find a circuit walk of length $O(m^2 \log(m))$ between any two given vertices, we can use any algorithm to solve \eqref{sys:polytope} optimally for some objective vector $c \in \R^n$ to find an optimal solution and then apply our algorithm.

\begin{corollary}
\label{corollary:slc-time}
Given a polyhedron $P = \{x : \mA x = b, x \ge \0\}$ with $\mA \in \R^{m \times n}$ and an objective $c \in \R^n$. If $\mathcal T(P)$ is the time to solve $\eqref{sys:polytope}$ optimally with objective $c$, then in time $\mathcal T(P) + \mathrm{poly}(m,n)$ we can find a circuit walk from any given initial feasible solution to an optimal vertex of $P$ of length $O(m^2 \log(m))$.
\end{corollary}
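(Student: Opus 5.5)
The plan is a direct reduction to \Cref{thm:main-theorem} and \Cref{cor:main-theorem-vertex-to-vertex}. The optimization is done once, by the black-box solver. Everything after that is the strongly polynomial walk construction, which never looks at $c$.

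\textbf{Step 1 (find the target).} Run the assumed LP algorithm on $\min\{c^\top x : x\in P\}$ in time $\mathcal T(P)$. If it returns an optimal point that is not basic, convert it to an optimal vertex $v^*$ in strongly polynomial time. The standard way is to repeatedly take a kernel vector supported inside the current support and move along it, in the direction that does not increase $c^\top x$, until some coordinate hits zero. While the columns on the support are dependent, a nonzero vector $z\in\ker(\mA)$ with $\supp(z)$ inside the current support exists. Optimality forces $c^\top z=0$ for every such $z$: otherwise a small step along $\pm z$ would stay feasible (the support coordinates are strictly positive) and improve the objective. If both $\pm z$ can be followed indefinitely, then $z$ lies in the lineality space, which is trivial because $P\subseteq\R^n_+$. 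Each move shrinks the support, so after at most $n$ linear-algebra steps the support columns are independent and the point is a vertex. This costs $\mathrm{poly}(m,n)$.

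\textbf{Step 2 (walk to the target).} Apply the algorithm of \Cref{thm:main-theorem} to the given start $x$ and the vertex $v^*$. This produces a circuit walk $x\to v^*$ in strongly polynomial time. If $x$ is itself a vertex, restrict $\mA$ to the columns $\supp(x)\cup\supp(v^*)$, which number at most $2m$, exactly as in the proof of \Cref{cor:main-theorem-vertex-to-vertex}. The walk then has length $O(m^2\log m)$. Every elementary vector of the restricted matrix, padded with zeros, is an elementary vector of $\mA$. Maximality of each step is also preserved, because the constraints on the dropped coordinates are the fixed equalities $x_j=0$ and the padded direction never touches them. Hence the walk is a valid circuit walk in $P$. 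The total time is $\mathcal T(P)+\mathrm{poly}(m,n)$.

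\textbf{Main obstacle: a non-vertex start.} For an arbitrary feasible start $x$, \Cref{thm:main-theorem} only gives length $n+O(m^2\log m)$. I do not see how to remove the additive $n$, and I believe the corollary as worded cannot hold for general feasible starts.

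\begin{itemize}
\item Each circuit step uses an elementary vector with support of size at most $m+1$, so it changes at most $m+1$ coordinates.
\item Any walk from $x$ to a vertex must zero out at least $|\supp(x)|-m$ coordinates.
\item Therefore every such walk has length at least $(|\supp(x)|-m)/(m+1)$. If $|\supp(x)|$ is close to $n$, this is roughly $n/m$, which exceeds $O(m^2\log m)$ once $n\gg m^3\log m$.
\end{itemize}

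What I can actually prove is therefore weaker than the wording:
\begin{itemize}
\item length $O(m^2\log m)$ when the initial solution is a vertex;
\item length $n+O(m^2\log m)$ from an arbitrary feasible start, which is exactly the guarantee of \Cref{thm:main-theorem};
\end{itemize}
in both cases within time $\mathcal T(P)+\mathrm{poly}(m,n)$. Justifying the unconditional $O(m^2\log m)$ claim for arbitrary feasible starts is the step I cannot carry out.
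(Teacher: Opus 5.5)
Your Steps 1--2 follow the paper's argument. The paper justifies this corollary in one sentence: compute an optimal solution with the black-box solver, then run the vertex-to-vertex construction of \Cref{cor:main-theorem-vertex-to-vertex}. Your conversion of an optimal point into an optimal vertex fills in a detail the paper takes for granted. Your restriction argument (padded elementary vectors, unchanged maximal step lengths) is the one used in that corollary's proof.

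The obstacle you raise is a real defect of the statement, not a gap in your proof. The paper's justification relies explicitly on walks ``between any two given vertices''. So it establishes exactly your two bullet points: length $O(m^2\log m)$ from a vertex, and $n+O(m^2\log m)$ from an arbitrary feasible point via \Cref{thm:main-theorem}. Your lower bound $(|\supp(x)|-m)/(m+1)$ is correct, since circuits have at most $\rk(\mA)+1\le m+1$ elements and vertices have support at most $m$.

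A concrete witness: for $m=2$, let $P=\{x\in\R^n : x_1=1,\ x_1+\dots+x_n=2,\ x\ge\0\}$ and start at the full-support point $(1,\tfrac{1}{n-1},\dots,\tfrac{1}{n-1})$. Every elementary vector is a multiple of $e_i-e_j$ with $i\neq j$ and $i,j\ge 2$, so each step creates at most one new zero coordinate. A vertex has $n-2$ zero coordinates, so every circuit walk to a vertex needs at least $n-2$ steps. This is unbounded for fixed $m$, so the corollary holds only in the corrected form you state.
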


\subsection{Algorithmic Overview}

We now outline the main ideas underlying our proof, deferring technical details to \Cref{sec:formal-algorithm}. Our proof constructs an explicit circuit walk from any feasible point to a target vertex $x^*$ (with basis $B$ and non-basic indices $N = [n]\setminus B$) in $n + O(m^2\log m)$ steps. The algorithm operates in two phases.

\paragraph{Phase 1: Support reduction.} The first phase is straightforward: we reduce the number of nonzero non-basic coordinates. Since any $m+1$ variables contain a circuit of $\ker(\mA)$, whenever $|\supp(x_N)| \ge m+1$ we can find an elementary vector $g$ supported entirely within $\supp(x_N)$ and augment along it, zeroing at least one coordinate. After at most $n-2m$ steps, we achieve $|\supp(x_N)| \le m$, giving at most $2m$ nonzero coordinates total. This phase requires no sophisticated analysis and contributes only the additive $n$ term to our bound.

\paragraph{Phase 2: Trapped variables and amortized progress.} The main contribution lies in Phase 2, where we achieve the $O(m^2 \log m)$ bound through a novel combination of \emph{trapped variables} and \emph{elimination steps}.

After Phase 1, we maintain a \emph{trapped set} $T \subseteq B$ of basic indices satisfying the invariant
\[
x_t \le m \cdot x_t^* \quad \text{for all } t \in T.
\]
This invariant says that trapped coordinates are within a factor of $m$ of their target values. Crucially, once an index becomes trapped, it remains trapped throughout the algorithm. Progress in Phase 2 is measured by two events: either the trapped set $T$ grows, or a non-basic coordinate is zeroed. Since $|T| \le m$ and $|\supp(x_N)| \le m$ after Phase 1, at most $2m$ such progress events can occur.

Between progress events, the algorithm performs two types of steps:

\paragraph{Norm-reduction steps.} These steps are standard and have, e.g., been used already in \cite{Dadush2024} to make geometric progress towards a target vertex: We decompose the direction $x^* - x$ into a conformal sum of elementary vectors $g^{(1)},\ldots, g^{(k)}$ with $k \le m$ (by \Cref{lem:conformal}, the number of circuits in a conformal decomposition can be upper bounded by the kernel dimension, which is at most $m$ after Phase 1). We then select the circuit $g^{(j^*)}$ that maximizes progress on a weighted $\ell_1$-norm over $N$, and augment along it.

By conformality, each $g^{(j)}_N \le \0$ (since $x^*_N = \0$ and $x_N \ge \0$), so these steps always decrease non-basic coordinates. Crucially, the step size $\alpha$ is at most $m$, which will follow from the greedy circuit selection. This bounded step size will preserve the trapped invariant: for any $t \in T$, conformality ensures $|g_t| \le |x^*_t - x_t|$, so the change in $x_t$ is at most $m \cdot |x^*_t - x_t| \le m \cdot x^*_t$, keeping $x_t \le m x^*_t$.

Each norm-reduction step decreases the weighted $\ell_1$-norm by a $(1-1/m)$ factor. After $O(m\log m)$ consecutive norm-reduction steps, the non-basic coordinates shrink below a threshold $\tau = 1/\mathrm{poly}(m)$ relative to a reference point $x^{(\refidx)}$, triggering an elimination step.

\paragraph{Elimination steps.} When $\|x_N/x_N^{(\refidx)}\|_\infty \le \tau$, we perform an elimination step designed to force progress. The idea is to extrapolate beyond the current point in the direction it has been moving. Define
\[
y = x + \frac{\rho}{1-\rho}(x - x^{(\refidx)})\, ,
\]
where $\rho = \max_{j\in N} x_j/x_j^{(\refidx)} \le \tau$ measures how much non-basic coordinates have shrunk. By construction, $y_N \le \0$: we have extrapolated past zero on all non-basic coordinates.

However, moving directly towards $y$ via a circuit direction in the decomposition of $y - x$ might zero a trapped coordinate before any non-basic coordinate, hindering the progress we seek. To prevent this, we pull the target slightly from $y$ toward $x^*$ by setting
\[
z = y + \lambda(x^* - y)
\]
with $\lambda = \Theta(1/\poly(m))$. This perturbation is carefully calibrated: for trapped coordinates $t$ with $x_t \ll x_t^*$, it ensures $z_t \ge x_t$, preventing conformal steps toward $z$ from decreasing such coordinates. Conversely, for trapped coordinates $t$ with $x_t$ close to $x_t^*$, the construction ensures that the relative change from $x$ toward $z$ for any coordinate in $N$ dominates that of $t$.

We then decompose $z - x$ conformally and augment along the circuit that makes most progress toward zeroing some coordinate in $N$. Crucially, due to the relative change on coordinates $N$ dominating the relative change on coordinates in $T$, we will observe that either some non-basic or non-trapped basic coordinate must hit zero first, guaranteeing progress: either $|T|$ increases or $|\supp(x_N)|$ decreases.

\paragraph{A strongly polynomial bound.} The crucial property of our augmentation scheme is that our parameter $\thresh$ that we require to ensure combinatorial progress is only polynomially small and, unlike prior works, does not depend on condition numbers.
Previous approaches could not achieve this because they analyzed specific algorithmic paths guided by numerical properties (objective gradients, circuit imbalances, or curvature measures). Our algorithm, by contrast, is designed so that \emph{any} conformal decomposition suffices: we select circuits greedily for concreteness, but the analysis only requires that the decomposition exists and has size at most $m$. This allows us to bound the walk length using purely combinatorial arguments.

While \emph{trapped variables} appeared already in \cite{Dadush2024}, the main novelty of our work is the introduction of \emph{elimination steps} and the use of carefully chosen auxiliary points which force combinatorial progress after only $O(m \log m)$ norm-reduction steps, eliminating all dependence on condition numbers.

\subsection{Formal Algorithm and Proof of \Cref{thm:main-theorem}}
\label{sec:formal-algorithm}

In this section we present the full algorithm and prove \Cref{thm:main-theorem}.
The full algorithm is given in \Cref{alg:circuit-augmentation}.

\begin{algorithm}[H]
\caption{Circuit Augmentation Algorithm for Strongly Polynomial Diameter Bound}
\label{alg:circuit-augmentation}
\KwIn{Feasible solution $x^{(0)}$ to \eqref{sys:polytope} with $\mA \in \R^{m \times n}$, target vertex $x^*$ with basis $B$, threshold value $\thresh = (2m)^{-3}$, $\lambda = (2m)^{-2}$.}
\KwOut{Sequence of circuit augmentations from $x^{(0)}$ to $x^*$}

$N \gets [n] \setminus B$ \tcp*{Non-basic variables}
$T^{(-1)} \gets \emptyset$ \tcp*{Trapped variables}

\tcp{Phase 1: Reduce support on non-basic variables}
$i \gets 0$\;
\While{$|\supp(x^{(i)}_N)| \ge m + 1$ \label{line:support-reduction-begin}}{
    Select elementary vector $g \in \EE(\mA)$ with $\supp(g) \subseteq \supp(x_N^{(i)})$ and $\supp(g^-) \neq \emptyset$\;
    $x^{(i+ 1)} \gets \aug_{P}(x^{(i)}, g)$\;
    $i \gets i + 1$\;
}
\label{line:support-reduction-end}

\tcp{Phase 2: Main circuit augmentation loop}
$\refidx \gets i$ \tcp*{Reference index (reset when $T$ grows)}
\While{$x^{(i)} \neq x^*$}{
    \tcp{Update trapped set}
    $T^{(i)} \gets \{j \in B : x_j^{(i)} \le mx_j^* \}$\;
    \If{$T^{(i)} \neq T^{(i-1)}$}{
        $\refidx \gets i$ \tcp*{Reset reference point}
    }

    \tcp{Norm-reduction step: shrink non-basic coordinates}
    \If{$\|x^{(i)}_N/x^{(\refidx)}_N\|_\infty > \thresh$}{
        \label{line:norm-reduction-on-N-begin}
        $(g^{(1)}, \ldots, g^{(k)}) \gets$ conformal decomposition of $x^* - x^{(i)}$\;
        $j^* \gets \argmax_{j \in [k]} \|\frac{g^{(j)}_N}{x_N^{(\refidx)}}\|_1 $ \tcp*{Best progress on weighted $\ell_1$-norm}
        $x^{(i+1)} \gets \aug_{P}(x^{(i)}, g^{(j^*)})$ \label{line:norm-reduction-augmentation}\;
        \label{line:norm-reduction-on-N-end}
    }
    \Else{\tcp{Elimination step: force a desired coordinate to zero}
        $q \gets \argmax_{j \in N} \frac{x^{(i)}_j}{x^{(\refidx)}_j}$\;
        $\rho \gets \frac{x^{(i)}_{q}}{x^{(\refidx)}_{q}}$ \tcp*{Shrinkage ratio; note $\rho \le \thresh$}
        $y \gets x^{(i)} + \frac{\rho}{1 - \rho} (x^{(i)} - x^{(\refidx)})$ \tcp*{Extrapolate past zero on $N$}
        $z \gets y + \lambda (x^* - y)$ \tcp*{Pull toward $x^*$ to protect small trapped coords}
        $(g^{(1)}, \ldots, g^{(k)}) \gets$ conformal decomposition of $z - x^{(i)}$ \label{line:elimination-step-conformal-decomposition} \;
        $j^* \gets \argmax_{j \in [k]} -g^{(j)}_q$ \tcp*{Circuit with most progress on $q$} \label{line:elimination-circuit-selection} 
        $x^{(i+1)} \gets \aug_{P}(x^{(i)}, g^{(j^*)})$ \label{line:zero-out-variable-in-N-augmentation}\;
    }
    
    $i \gets i + 1$\;
}

\Return{Circuit walk $(x^{(0)}, x^{(1)}, \ldots, x^{(i)})$}
\end{algorithm}

\begin{proof}[Proof of \Cref{thm:main-theorem}]
We prove \Cref{thm:main-theorem} by analyzing \Cref{alg:circuit-augmentation}. 
We begin with Phase 1, which reduces the support on the non‑basic variables, and then analyze Phase 2.

\paragraph{Phase 1: Support Reduction (\LineRefRange{line:support-reduction-begin}{line:support-reduction-end}).}
Initially, if $|\supp(x^{(i)}_N)| \ge m + 1$, we perform circuit augmentations to reduce the support. Since any set of $m + 1$ variables in $\ker(\mA)$ contains a circuit, we can always find an elementary vector $g \in \EE(\mA)$ with $\supp(g) \subseteq \supp(x^{(i)}_N)$ and $\supp(g^-) \neq \emptyset$. Each augmentation reduces $|\supp(x^{(i)}_N)|$ by at least one. This phase terminates after at most $n - 2m$ iterations with $|\supp(x^{(i)}_N)| \le m$, giving us at most $2m$ non-zero variables in total.

\paragraph{Reducing the norm on variables in $N$ (\LineRefRange{line:norm-reduction-on-N-begin}{line:norm-reduction-on-N-end}).}
We maintain a set $T \subseteq B$ of \emph{trapped} variables satisfying $x_t^{(i)} \le m x_t^*$ for all $t \in T$. 

Let us first show that the set of trapped variables will only increase in the norm-reduction steps in \LineRef{line:norm-reduction-augmentation} and that furthermore, these steps make significant progress in reducing the weighted $\ell_1$ norm on $N$.
\Cref{claim:progress-on-N-norm} is analogous to progress guarantees obtained in prior work on circuit augmentation algorithms~\cite{Dadush2024}.

\begin{claim}
\label{claim:progress-on-N-norm}
If the augmentation in iteration $i$ happens in \LineRef{line:norm-reduction-augmentation}, then $T^{(i+1)} \supseteq T^{(i)}$ and furthermore, if the current reference index is $\refidx$, then
\[
    \left\|\frac{x_N^{(i+1)}}{x_N^{(\refidx)}}\right\|_1 \le \left(1 - \frac{1}{m}\right) \left\|\frac{x_N^{(i)}}{x_N^{(\refidx)}}\right\|_1\, .
\]

\end{claim}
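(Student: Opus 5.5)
The plan is to exploit the structure of the conformal decomposition of $x^* - x^{(i)}$ restricted to $N$. Write $x = x^{(i)}$, let $g^{(1)},\dots,g^{(k)}$ be the decomposition computed in the norm-reduction step, and let $g = g^{(j^*)}$. Since $x^*_N = \0$ and $x_N \ge \0$, conformality gives $g^{(j)}_N \le \0$ for every $j$ and $\sum_j g^{(j)}_N = -x_N$.

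\textbf{Bounding $k$.} After Phase 1, $\supp(x^* - x) \subseteq B \cup \supp(x_N)$, which has size at most $2m$. The columns indexed by $B$ already have rank $m$, so the kernel of $\mA$ restricted to these columns has dimension at most $m$. Applying \Cref{lem:conformal} to this restricted system yields $k \le m$.

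\textbf{Choice of $g$.} I will use the weighted $\ell_1$-norm $\|\cdot/x^{(\refidx)}_N\|_1$. Coordinates $j \in N$ with $x^{(\refidx)}_j = 0$ play no role: the supports on $N$ only shrink over the algorithm, so $x_j = 0$ there, and by conformality $g^{(h)}_j = 0$ for every $h$. By additivity of this norm on sign-compatible vectors,
\[
\sum_{j} \left\|\frac{g^{(j)}_N}{x^{(\refidx)}_N}\right\|_1 = \left\|\frac{x_N}{x^{(\refidx)}_N}\right\|_1 .
\]
Hence the greedy choice satisfies $\|g_N/x^{(\refidx)}_N\|_1 \ge \frac1m \|x_N/x^{(\refidx)}_N\|_1$. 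In particular $g_N \neq \0$, because the right-hand side exceeds $\thresh > 0$. So $g$ has a negative entry and $\aug_P$ is well defined.

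\textbf{Lower bound $\alpha \ge 1$.} The point $x + g$ lies coordinatewise between $x$ and $x^*$, since $g \sqsubseteq x^* - x$. It is therefore nonnegative, and it satisfies $\mA(x+g) = b$. Hence the maximal step length $\alpha$ is at least $1$. Because every entry of $g_N$ is nonpositive, the weighted norm then drops by exactly $\alpha \|g_N/x^{(\refidx)}_N\|_1 \ge \frac1m \|x_N/x^{(\refidx)}_N\|_1$. This gives the claimed $(1-1/m)$ contraction.

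\textbf{Upper bound $\alpha \le m$ and the trapped set.} This is the step needing the most care. Feasibility of the new point on $N$ means $\alpha |g_j| \le x_j$ for each $j \in N$. Dividing by $x^{(\refidx)}_j$ and summing gives
\[
\alpha \left\|\frac{g_N}{x^{(\refidx)}_N}\right\|_1 \le \left\|\frac{x_N}{x^{(\refidx)}_N}\right\|_1 \le m \left\|\frac{g_N}{x^{(\refidx)}_N}\right\|_1,
\]
so $\alpha \le m$. Now take $t \in T^{(i)}$, so $x_t \le m x^*_t$, and split into two cases.
\begin{itemize}
\item If $g_t \le 0$, then $x_t$ does not increase, so $t$ stays trapped.
\item If $g_t > 0$, conformality gives $g_t \le x^*_t - x_t$. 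Then $x_t + \alpha g_t \le x_t + m(x^*_t - x_t) \le m x^*_t$, using $m \ge 1$ and $x_t \ge 0$.
\end{itemize}
Since $T^{(i+1)}$ is defined as the set of $j \in B$ with $x^{(i+1)}_j \le m x^*_j$, every $t \in T^{(i)}$ belongs to $T^{(i+1)}$, which proves $T^{(i+1)} \supseteq T^{(i)}$.
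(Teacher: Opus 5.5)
Your proof is correct and follows the paper's argument essentially step for step. It uses the same ingredients: the bound $k \le m$ from \Cref{lem:conformal} on the restricted support, the greedy choice capturing a $1/k$ share of the weighted $\ell_1$-norm, $\alpha \ge 1$ from conformality (your observation that $x+g$ lies between $x$ and $x^*$ is a cleaner phrasing of the paper's ratio bound), $\alpha \le m$ from nonnegativity of $x_N$ after the step, and the sign-based case split for trapped coordinates. Your extra care tidies details the paper glosses over: you use $\supp(x^*-x)\subseteq B\cup\supp(x_N)$ rather than $\supp(x^{(i)})$, you handle zero weights, and you check $g_N\neq\0$ so that $\aug_P$ is well defined.
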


\begin{claimproof}
Let $t \in T^{(i)}$ be any trapped variable at iteration $i$.
Let $(g^{(1)}, \ldots, g^{(k)})$ be the conformal decomposition of $x^* - x^{(i)}$, and let $j^* \in \{1, \ldots, k\}$ with $g^{(j^*)}$ being the selected circuit for augmentation in \LineRef{line:norm-reduction-augmentation}. By the induction hypothesis, we have $x_t^{(i)} \le m x_t^*$. By definition $x^{(i+1)} = x^{(i)} + \alpha g^{(j^*)}$ for some $\alpha > 0$. First, note that $g^{(j^*)}_N \le \0_N$ by conformality and $\alpha \le m$ as we have that
\begin{align}
    \label{eq:ell_1-bound}
    \begin{aligned}
    \left\|\frac{x^{(i+1)}_N}{x^{(\refidx)}_N}\right\|_1 &= \left\|\frac{x^{(i)}_N}{x^{(\refidx)}_N} + \alpha \frac{g^{(j^*)}_N}{x^{(\refidx)}_N}\right\|_1 = \left\|\frac{x^{(i)}_N}{x^{(\refidx)}_N}\right\|_1 - \alpha \left\|\frac{g^{(j^*)}_N}{x^{(\refidx)}_N}\right\|_1 \\
    &\le \left\|\frac{x^{(i)}_N}{x^{(\refidx)}_N}\right\|_1  - \frac{\alpha}{k} \left\|\frac{x^{(i)}_N}{x^{(\refidx)}_N}\right\|_1
    = \left(1- \frac{\alpha}{k}\right) \left\|\frac{x^{(i)}_N}{x^{(\refidx)}_N}\right\|_1\, , 
    \end{aligned}
\end{align} 
where the last inequality follows by the choice of $g^{(j^*)}$ maximizing the weighted $\ell_1$ norm reduction on $N$. The second equality used the fact that $g^{(j^*)}_N \le \0_N$. Using the further fact that $k \le \dim(\ker(\mA_{\supp(x^{(i)})})) \le 2m - \rk(\mA_B) = 2m - m = m$ by \Cref{lem:conformal} and the fact that the left hand side of \eqref{eq:ell_1-bound} is nonnegative gives $\alpha \le m$.
From here, note that we have for any $t \in T$ with $x_t^{(i)} \ge x_t^*$ that $g^{(j^*)}_t \le 0$ by conformality. Hence $x_t^{(i+1)} \le x_t^{(i)} \le m x_t^*$. For $t \in T$ with $x_t^{(i)} \le x_t^*$, we have by conformality that $0 \le g^{(j^*)}_t \le x_t^* - x_t^{(i)}$, giving
\[
    x_t^{(i+1)} = x_t^{(i)} + \alpha g^{(j^*)}_t \le x_t^{(i)} + \alpha (x_t^* - x_t^{(i)}) \le  x_t^{(i)} + m (x_t^* - x_t^{(i)}) \le mx_t^*\, ,
\]
which proves the first part of the claim. For the second part of the claim, note that we also have that $\alpha \ge 1$, again by conformality as we have that
\begin{equation}
\alpha = \min \left\{-\frac{x_\ell^{(i)}}{g_\ell^{(j^*)}} : \ell \in \supp([g^{(j^*)}]^{-})\right\} \ge \min \left\{\frac{x_\ell^{(i)}}{x_\ell^{(i)} - x_\ell^*} : \ell \in \supp([x^* - x^{(i)}]^-) \right\} \ge 1\, .
\end{equation} 
Therefore, we have with \eqref{eq:ell_1-bound} that
\[
    \left\|\frac{x^{(i+1)}_N}{x^{(\refidx)}_N}\right\|_1 \le \left(1 - \frac{\alpha}{k}\right) \left\|\frac{x^{(i)}_N}{x^{(\refidx)}_N}\right\|_1 \le \left(1 - \frac{1}{m}\right) \left\|\frac{x^{(i)}_N}{x^{(\refidx)}_N}\right\|_1\, ,
\]
which proves the claim.
\end{claimproof}

A direct consequence of \Cref{claim:progress-on-N-norm} is the following.

\begin{claim}
Within $m\log(m/\thresh)$ many consecutive executions of \LineRef{line:norm-reduction-augmentation} either the set of trapped variables $T$ gets extended or the if condition $\|x_N^{(i)}/x_N^{(\refidx)}\|_\infty > \thresh$ in \LineRef{line:norm-reduction-on-N-begin} becomes false.
\end{claim}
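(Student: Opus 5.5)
Suppose the trapped set does not change during a block of consecutive executions of \LineRef{line:norm-reduction-augmentation}. The argument is a potential-function computation driven by \Cref{claim:progress-on-N-norm}. Since the reference index $\refidx$ is reset only when $T$ changes, it stays fixed throughout this block. By \Cref{claim:progress-on-N-norm}, $T$ never loses elements in these steps, so ``$T$ unchanged'' is the same as ``$T$ not extended''. I therefore assume $T$ is not extended during $K \coloneqq \lceil m\log(m/\thresh)\rceil$ consecutive norm-reduction steps, and I will derive that the if condition fails afterwards.

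First I bound the potential $\Phi^{(i)} \coloneqq \|x_N^{(i)}/x_N^{(\refidx)}\|_1$ at the start of the block. Coordinates $j\in N$ with $x^{(\refidx)}_j = 0$ stay zero forever: every step decreases non-basic coordinates, since $g_N\le \0$ by conformality in both norm-reduction and elimination steps. So these coordinates contribute $0/0=0$ and can be ignored. Because all non-basic coordinates are monotonically non-increasing from $\refidx$ on, each remaining ratio $x^{(i)}_j/x^{(\refidx)}_j$ is at most $1$. Phase 1 guarantees $|\supp(x^{(\refidx)}_N)|\le m$, so $\Phi^{(i_0)}\le m$ at the first iteration $i_0$ of the block.

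Next I apply the contraction. By \Cref{claim:progress-on-N-norm}, each of the $K$ steps multiplies $\Phi$ by at most $(1-1/m)$. Hence after the block,
\[
\Phi^{(i_0+K)} \le m\left(1-\tfrac1m\right)^{K} \le m\, e^{-K/m} \le m\cdot \frac{\thresh}{m} = \thresh .
\]
Since $\|\cdot\|_\infty\le\|\cdot\|_1$, we get $\|x_N^{(i_0+K)}/x_N^{(\refidx)}\|_\infty \le \thresh$, so the condition in \LineRef{line:norm-reduction-on-N-begin} is false. If $T$ changes earlier, we are already in the first alternative of the claim.

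The main obstacle is the initial bound $\Phi\le m$, which needs two ingredients. The first is that between $\refidx$ and $i_0$ there may have been elimination steps as well as norm-reduction steps, so I must check that those also never increase a coordinate in $N$. This holds because in an elimination step $z_N\le\0\le x_N$: we have $y_N\le \0$ and $x^*_N=\0$, so $z_N = (1-\lambda)y_N\le\0$, and conformality then gives $g_N\le\0$. The second is that the support bound $|\supp(x_N)|\le m$ persists from Phase 1. This follows from the same monotonicity, since a zero coordinate of $N$ never becomes nonzero again.
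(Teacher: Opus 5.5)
Your proof is correct and takes the same route as the paper. With $\refidx$ fixed, you chain the $(1-1/m)$ contraction of \Cref{claim:progress-on-N-norm} over $m\log(m/\thresh)$ steps, start from $\|x_N^{(i)}/x_N^{(\refidx)}\|_1\le m$, and finish with $\|\cdot\|_\infty\le\|\cdot\|_1$. The paper uses that initial bound without comment; your justification of it is sound: every Phase 2 direction satisfies $g_N\le\0$, so non-basic coordinates never increase, and $|\supp(x_N)|\le m$ therefore persists after Phase 1.
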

\begin{claimproof}
Let $i$ be an iterate and let $p = i + m \cdot \log(m/\thresh)$ be such that in each iteration $j \in \{i, \ldots, p-1\}$ the augmentation is performed in \LineRef{line:norm-reduction-augmentation}. If $T$ has not been extended in any of these iterations, then the reference index $\refidx \le i$ remains unchanged. By \Cref{claim:progress-on-N-norm}, we have that
     \begin{align*}
     \left\|\frac{x_N^{(p)}}{x_N^{(\refidx)}}\right\|_\infty &\le \left\|\frac{x_N^{(p)}}{x_N^{(\refidx)}}\right\|_1 \le \left(1 - \frac{1}{m}\right)^{p - i} \left\|\frac{x_N^{(i)}}{x_N^{(\refidx)}}\right\|_1 = \left(1 - \frac{1}{m}\right)^{m\log(m/\thresh)} \left\|\frac{x_N^{(i)}}{x_N^{(\refidx)}}\right\|_1 \le \left(1 - \frac{1}{m}\right)^{m\log(m/\thresh)} \cdot m 
     \\ 
     & \le e^{-\log(m/\thresh)} \cdot m\le \frac{\thresh}{m} \cdot m = \thresh\, ,
     \end{align*}
as desired.
\end{claimproof}

We now claim that a single execution of the elimination step in \LineRef{line:zero-out-variable-in-N-augmentation} either sets a variable in $N$ to zero or extends $T^{(\refidx)}$ (= $T^{(i)}$). From now on, we let $T \coloneqq T^{(\refidx)}$ for simplicity.

\paragraph{Elimination step analysis}
Let $i > \refidx$ be such that the condition $\|x_N^{(i)}/x_N^{(\refidx)}\|_\infty \le \thresh$ holds. Let $q \in N$ be the index maximizing $x_q^{(i)}/x_q^{(\refidx)}$ and let $\rho \coloneqq x_q^{(i)}/x_q^{(\refidx)} \le \thresh$. As in the algorithm, we define 
the vectors
\[
y \coloneqq x^{(i)} + \frac{x_q^{(i)}}{x_q^{(\refidx)} - x_q^{(i)}} (x^{(i)} - x^{(\refidx)}) = x^{(i)} + \frac{\rho}{1- \rho} (x^{(i)} - x^{(\refidx)})\, , \qquad z \coloneqq y + \lambda(x^* - y)\, .
\]
Note that $y_q = 0$ and $z_q = 0$. By construction, we have $y_N \le \0_N$ as for any $j \in N$, we get that 
\[
(1 - \rho) y_j = (1- \rho)x_j^{(i)} + \rho (x_j^{(i)} - x_j^{(\refidx)}) = x_j^{(i)} - \rho x_j^{(\refidx)} \le x_j^{(i)} - \frac{x_j^{(i)}}{x_j^{(\refidx)}} x_j^{(\refidx)} = 0.
\]

For all $t \in T$, we have
\begin{align*}
    z_t - x_t^{(i)} &= \lambda (x_t^* - y_t) + \frac{\rho}{1- \rho}(x_t^{(i)} - x_t^{(\refidx)})  \\
&= \lambda(x_t^* - x_t^{(i)}) + \lambda (x_t^{(i)} - y_t) +  \frac{\rho}{1- \rho}(x_t^{(i)} - x_t^{(\refidx)}) \\
&= \lambda(x_t^* - x_t^{(i)}) + \frac{(1-\lambda)\rho}{1- \rho}(x_t^{(i)} - x_t^{(\refidx)})\, .
\end{align*}

Now, by \Cref{claim:progress-on-N-norm} we have that $x_t^{(i)}, x_t^{(\refidx)} \le m x_t^*$. Therefore, we have that
\begin{equation}
\label{eq:elimination_proximity_bound}
\begin{aligned}
\frac{|z_t - x_t^{(i)}|}{x_t^{*}} &= \frac{\left|\lambda(x_t^* - x_t^{(i)}) + \frac{(1-\lambda)\rho}{1-\rho}(x_t^{(i)} - x_t^{(\refidx)})\right|}{x_t^{*}} \\
& \le \lambda \frac{|x_t^* - x_t^{(i)}|}{x_t^{*}} + \frac{(1-\lambda)\rho}{1-\rho}\frac{|x_t^{(i)} - x_t^{(\refidx)}|}{x_t^{*}} \\
&\le \left(\lambda + \frac{(1-\lambda)\rho}{1-\rho}\right) m\,. 
\end{aligned}
\end{equation}

Observe that $\lambda = (2m)^{-2} \le 1$, which implies that $z$ is a convex combination of $y$ and $x^*$. Consequently, $z_N \le \0_N$.

We now show that when we decompose $z - x^{(i)}$ conformally (\LineRef{line:elimination-step-conformal-decomposition}) and select $g^{(j^*)}$ maximizing $-g^{(j)}_q$ (\LineRef{line:elimination-circuit-selection}), the augmentation (\LineRef{line:zero-out-variable-in-N-augmentation}) will zero out a coordinate in $N$ or $B \setminus T$ before any coordinate in $T$.

The coordinate that gets zeroed out first is 
\[
    \argmax_{j : g^{(j^*)}_j < 0} \frac{-g^{(j^*)}_j}{x_j^{(i)}}\, .
\]

For $t \in T$ with $x_t^{(i)} \le \tfrac12 x_t^*$, we have that
\begin{align*}
 z_t - x_t^{(i)} &= \lambda(x_t^* - x_t^{(i)}) + \frac{(1-\lambda)\rho}{1-\rho}(x_t^{(i)} - x_t^{(\refidx)})\\
&\ge \lambda(x_t^* - x_t^{(i)}) - \frac{(1-\lambda)\rho}{1-\rho} \left|x_t^{(i)} - x_t^{(\refidx)}\right| \\
&\ge \lambda \frac{x_t^*}{2} - \frac{(1-\lambda)\rho}{1-\rho} \cdot m x_t^* \\
&= \left(\frac{\lambda}{2} - \frac{(1-\lambda)\rho}{1-\rho}m\right) x_t^* \\
&\ge \left(\frac{\lambda}{2} - \rho m\right) x_t^* \\
&\ge 0\, .
\end{align*}
By conformality, we therefore have for all $t \in T$ with $x_t^{(i)} \le \tfrac12 x_t^*$ that $g^{(j^*)}_t \ge 0$. Hence, such coordinates cannot be zeroed out in the conformal augmentation. For $t \in T$ with $x_t^{(i)} \ge \tfrac12 x_t^*$ we have that 
\begin{align}
\label{eq:bound_elimination_on_T}
\frac{-g^{(j^*)}_t} {x_t^{(i)}} &\le \frac{|z_t - x_t^{(i)}|}{x_t^{(i)}} \nonumber \\
&\le  \frac{2|z_t - x_t^{(i)}|}{x_t^{*}} \nonumber \\
&\le 2\left(\lambda + \frac{(1-\lambda)\rho}{1-\rho}\right) m \tag{by \eqref{eq:elimination_proximity_bound}}  \\
&\le 2\left(\lambda + \frac{(1-\lambda)\thresh}{1-\thresh}\right)m \tag{as $\rho \le \thresh$}  \\
&= 2\left((2m)^{-2} + \frac{(1-(2m)^{-2})\cdot (2m)^{-3}}{1-(2m)^{-3}}\right)m \nonumber \\
&< \frac{1}{m}\, . \nonumber 
\end{align}

On the other hand, we have that $z_q = 0$ and so $\sum_j g_q^{(j)} = z_q - x_q^{(i)} = -x_q^{(i)}$ and so 
\begin{equation*}
\max_{j \in N} \frac{-g_j^{(j^*)}}{x_j^{(i)}} \ge \frac{-g_q^{(j^*)}}{x_q^{(i)}} \ge \frac{1}{k} \sum_{j = 1}^k \frac{-g_q^{(j)}}{x^{(i)}_q} = \frac{1}{k} \ge \frac{1}{m} \, .
\end{equation*} 
So, the two inequalities above yield 
 \begin{equation}
    \max_{j \in N} \frac{-g_j^{(j^*)}}{x_j^{(i)}} > \max_{t \in T} \frac{-g_t^{(j^*)}}{x_t^{(i)}}\, .
 \end{equation}
 Therefore, the augmentation in \LineRef{line:zero-out-variable-in-N-augmentation} cannot set a variable in $T$ to zero. As a consequence, either a variable in $N$ is set to zero, or a variable in $B \setminus T$ is set to zero. In the former case, this variable will remain zero forever, while in the latter case, the variable that is set to zero will now satisfy $x_j^{(i+1)} = 0 \le mx_j^*$ and so $T$ gets extended.

With this we proved the main part of the theorem. 
 To show that the algorithm is correct we still have to show that after this augmentation, variables that originated in $T$ remain trapped.

\begin{claim}
\label{claim:elimination-trapped-invariant}
Let $i$ be an iterate where the augmentation is performed in \LineRef{line:zero-out-variable-in-N-augmentation} and let $x^{(i+1)}$ be the resulting point. Then, $T^{(i)} \subseteq T^{(i+1)}$.
\end{claim}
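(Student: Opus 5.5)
The plan is to fix a coordinate $t\in T^{(i)}$ and bound $x_t^{(i+1)}=x_t^{(i)}+\alpha g^{(j^*)}_t$ directly. This needs two ingredients: an upper bound on the step length $\alpha$, and an upper bound on $g^{(j^*)}_t$ that comes from conformality with $z-x^{(i)}$.

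First, I check that $T^{(i)}=T^{(\refidx)}=T$, so that the estimates of the elimination-step analysis apply. If $T$ had changed at iteration $i$, then $\refidx=i$ and $\|x^{(i)}_N/x^{(\refidx)}_N\|_\infty=1>\thresh$, so no elimination step would occur.

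Second, I bound $\alpha\le m$, mirroring \Cref{claim:progress-on-N-norm}. The analysis above gives $-g^{(j^*)}_q\ge x_q^{(i)}/k>0$, so $q\in\supp([g^{(j^*)}]^-)$. Since $\alpha$ is the minimum ratio over negative coordinates, $\alpha\le x_q^{(i)}/(-g^{(j^*)}_q)\le k\le m$, using $k\le m$ from \Cref{lem:conformal} as before.

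Third, I bound the increase of $x_t$. If $g^{(j^*)}_t\le 0$, then $x_t^{(i+1)}\le x_t^{(i)}\le m x_t^*$ and we are done. Otherwise conformality gives $0<g^{(j^*)}_t\le z_t-x_t^{(i)}$. Write $c\coloneqq(1-\lambda)\rho/(1-\rho)$ and use the identity derived earlier together with $x_t^{(\refidx)}\ge 0$:
\[
z_t-x_t^{(i)}=\lambda(x_t^*-x_t^{(i)})+c\,(x_t^{(i)}-x_t^{(\refidx)})\le \lambda x_t^*-(\lambda-c)\,x_t^{(i)}.
\]
Since $c\le \thresh/(1-\thresh)<\lambda$, positivity of this quantity forces $x_t^{(i)}<\frac{\lambda}{\lambda-c}x_t^*$. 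With $\lambda=(2m)^{-2}$ and $\thresh=(2m)^{-3}$, this gives $x_t^{(i)}\le (1+O(1/m))\,x_t^*$, certainly at most $\tfrac32 x_t^*$. The increment is then at most $\alpha\,\lambda x_t^*\le m(2m)^{-2}x_t^*\le x_t^*/4$. Hence $x_t^{(i+1)}\le 2x_t^*\le m x_t^*$ for $m\ge 2$, so $t\in T^{(i+1)}$.

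The main obstacle is the third step. The crude bound \eqref{eq:elimination_proximity_bound} multiplied by $\alpha\le m$ does not suffice when $x_t^{(i)}$ is already close to $m x_t^*$. The key is to keep the negative term $-\lambda x_t^{(i)}$ from the pull toward $x^*$. That term shows that a trapped coordinate can only increase when it is already at most about $x_t^*$, which leaves plenty of slack below $m x_t^*$. The only remaining care is verifying the numeric inequality $c<\lambda$ and the constant bound for the chosen $\lambda,\thresh$.
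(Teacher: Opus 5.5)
Your proof is correct and follows the paper's argument. Both bound $\alpha\le m$ through the coordinate $q$, and both use the $-\lambda x_t^{(i)}$ pull toward $x^*$ to show that a trapped coordinate can only increase when $x_t^{(i)}$ is at most about $\tfrac32 x_t^*$; this is the paper's case split at $\tfrac32 x_t^*$. The only difference is cosmetic: by dropping $-c\,x_t^{(\refidx)}\le 0$ you merge the paper's cases and bound the increment directly by $\alpha\lambda x_t^*$, so you never need \eqref{eq:elimination_proximity_bound} or $x_t^{(\refidx)}\le m x_t^*$, and your first step, while harmless, is not actually needed.
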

\begin{claimproof}
For any $t \in T$ with $x_t^{(i)} \ge z_t$, we have by conformality that $g^{(j^*)}_t \le 0$ and so $x_t^{(i+1)} \le x_t^{(i)} \le mx_t^*$. 

For $t \in T$ with $x_t^{(i)} \ge \tfrac32 x_t^*$, we get that
\begin{equation*}
    z_t - x_t^{(i)} = \lambda (x_t^* - x_t^{(i)}) + \frac{(1-\lambda)\rho}{1-\rho} (x_t^{(i)} - x_t^{(\refidx)}) \le -\frac{\lambda}{2} x_t^* + \frac{(1-\lambda)\rho}{1-\rho} m x_t^* \le \left(-\frac{\lambda}{2} + \rho m\right) x_t^* \le \left(-\frac{\lambda}{2} + \thresh m\right) x_t^* = 0\, ,
\end{equation*}
and so $g^{(j^*)}_t \le 0$. 

It therefore remains to consider the case $x_t^{(i)} \le \tfrac32 x_t^*$ and $z_t \ge x_t^{(i)} $.
 Now, note that similar to the proof of \Cref{claim:progress-on-N-norm}, we have that the step size $\alpha$ in the augmentation $x^{(i+1)} = x^{(i)} + \alpha g^{(j^*)}$ satisfies $\alpha \le m$.
This can be observed by noting that 
\begin{equation}
    -g_q^{(j^*)} = \max_{j \in [k]} -g_q^{(j)} \ge \frac{1}{k} \sum_{j = 1}^k -g_q^{(j)} = \frac{1}{k} (x_q^{(i)} - z_q) = \frac{1}{k}x_q^{(i)} \ge \frac{1}{m} x_q^{(i)}\, ,
\end{equation} 
where we used again that $z_q = 0$. Hence, we obtain
\begin{equation}
\alpha = \min \left\{-\frac{x_j^{(i)}}{g_j^{(j^*)}} : j \in \supp([g^{(j^*)}]^{-})\right\} \le -\frac{x_q^{(i)}}{g_q^{(j^*)}} \le m\, .
\end{equation}
 But then, we have with \eqref{eq:elimination_proximity_bound} that
\begin{align}
x_t^{(i+1)} &= x_t^{(i)} + \alpha g^{(j^*)}_t \notag \\
&\le x_t^{(i)} + \alpha (z_t - x_t^{(i)}) \tag{by conformality} \\
& \le x_t^{(i)} + m\left(\lambda + \frac{(1-\lambda)\rho}{1-\rho}\right) m x_t^* \tag{by $\alpha \le m$ and \eqref{eq:elimination_proximity_bound}}\\
& \le \frac{3}{2}\left(1 + m^2 \cdot \left(\lambda + \frac{(1-\lambda)\rho}{1-\rho}\right)\right) x_t^*  \tag{by $x_t^{(i)} \le \frac{3}{2} x_t^*$}\\
& \le \frac{3}{2}\left(1 + m^2 \cdot \left(\lambda + \rho\right)\right) x_t^*  \tag{by $\lambda > \rho$} \\
& \le \frac{3}{2}\left(1 + m^2 \cdot \left((2m)^{-2}+ (2m)^{-3}\right)\right) x_t^* \notag \\
& \le m x_t^* \notag\, , 
\end{align} 
where the last inequality follows by the assumption that $m \ge 2$.
This proves the claim.
\end{claimproof}

It remains to analyze the total number of circuit augmentations of the algorithm.
By \Cref{claim:progress-on-N-norm}, we perform at most $m \log(m/\thresh)$ consecutive norm-reduction iterations before either $T$ gets extended or we reduced the relative norm on $N$ below $\thresh$ so that the elimination step is called, which either extends $T$ or sets a variable in $N$ to zero. Since $|T| \le m$ and $|N| \le m$ after Phase 1, there are at most $2m$ such progress events, so the total number of augmentations after Phase 1 is at most $(2m) \cdot m \log(m/\thresh) = O(m^2 \log(m))$.
\end{proof}

\subsection{Monotone Diameter}

A circuit walk is \emph{monotone} with respect to an objective $c \in \R^n$ if the objective value does not increase along the walk. The \emph{monotone circuit diameter} measures the worst-case length of shortest monotone walks.

\begin{definition}
\label{def:monotone-diameter}
Let $P = \{x : \mA x = b, x \ge \0\} \subseteq \R^n$ be a polyhedron. A circuit walk $(x^{(0)}, x^{(1)}, \ldots, x^{(k)})$ is \emph{monotone with respect to $c \in \R^n$} if $c^\top x^{(i+1)} \le c^\top x^{(i)}$ for all $i \in \{0, \ldots, k-1\}$. The \emph{monotone circuit diameter} of $P$ is the maximum, over all vertices $u, v \in P$ and objectives $c \in \R^n$ with $v \in \argmin_{x \in P} c^\top x$, of the length of the shortest monotone circuit walk from $u$ to $v$.
\end{definition}

Very recently, Black, N\"obel, and Steiner~\cite{black2025shortcircuitwalksfixed} showed that approximating the shortest monotone circuit walk between two vertices is hard.

\begin{theorem}[{\cite[Corollary 1.7]{black2025shortcircuitwalksfixed}}]
\label{thm:hardness}
For every $\eps > 0$ and $m \ge 2$, the following problem is NP-hard: Given a polyhedron $P = \{x : \mA x = b, x \ge \0\} \subseteq \R^n$ with $\mA \in \R^{m \times n}$, two vertices $u, v \in P$, and an objective $c$ minimized at $v$, compute a monotone circuit walk from $u$ to $v$ approximating the minimum possible length of such a walk to within a factor of $O(m^{1 - \eps})$.
\end{theorem}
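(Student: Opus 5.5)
Since this statement is quoted from \cite[Corollary 1.7]{black2025shortcircuitwalksfixed}, the plan is to reconstruct a gap-preserving reduction rather than to use anything from \Cref{sec:bound}. The source problem will be one with $N^{1-\eps}$ inapproximability, and I would take graph colouring, for which Zuckerman's derandomisation of the Feige--Kilian/H{\aa}stad results gives NP-hardness of approximating the chromatic number $\chi(G)$ of an $N$-vertex graph within $N^{1-\eps}$. The goal is to build, from $G$, a polyhedron $P=\{x:\mA x=b,\,x\ge\0\}$ with $m=\Theta(N)$ rows, vertices $u,v$, and an objective $c$ minimised at $v$. The shortest monotone circuit walk from $u$ to $v$ should then have length $\chi(G)+O(1)$, or at least length within constant factors of $\chi(G)$ on both sides. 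An $O(m^{1-\eps})$-approximation would then approximate $\chi(G)$ within $O(N^{1-\eps})$.

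The construction has three parts.
\begin{enumerate}[(i)]
\item Introduce one ``token'' coordinate per vertex of $G$. The start vertex $u$ has every token positive and the target $v$ has every token zero. Monotonicity w.r.t.\ $c$ forces every step to decrease the total token mass and forbids refilling a token once it is cleared, so a walk to $v$ must clear all tokens.
\item Choose the columns of $\mA$ so that an elementary vector decreasing a set $S$ of tokens simultaneously, with a maximal step that empties them all at once, exists exactly when $S$ is an independent set of $G$. The intended mechanism is that each edge $\{a,b\}$ contributes a row coupling tokens $a$ and $b$ through an auxiliary slack. Lowering both tokens in one step then requires a circuit whose support already contains a dependent set, contradicting support-minimality. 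Equivalently, the ratios $x_a/g_a$ and $x_b/g_b$ cannot coincide when the step is maximal.
\item Set up the auxiliary coordinates so that any independent set can be emptied in a single step. This gives a walk of length $\chi(G)+O(1)$: one step per colour class, plus a constant number of clean-up steps.
\end{enumerate}

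For the lower bound I will show that every monotone walk of length $k$ yields a proper colouring of $G$ with $O(k)$ colours. Colour each vertex by the index of the step that zeroes its token. Part (ii) makes this colouring proper, provided tokens can neither be refilled (guaranteed by monotonicity) nor cleared across several partial steps in a way that merges two adjacent vertices.

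The main obstacle is exactly this lower bound. Circuit steps may move freely along auxiliary coordinates, and partial decreases of tokens are allowed. I must therefore rule out ``cheating'' walks that exploit partial decreases, and ensure that the number of rows stays $O(N)$ rather than $O(|E(G)|)$, since the gap must be measured in $m$. To keep $m$ linear, I would aggregate edge constraints using generic real coefficients: one row per vertex, with the edge pattern encoded in the support structure of the columns. For the partial-decrease issue, the first thing to try is to give the tokens generic values in $u$ and to choose $c$ so that any step decreasing a token without zeroing it is non-monotone or infeasible. Finally, note that $m\ge 2$ is needed only so that the construction admits the auxiliary rows.
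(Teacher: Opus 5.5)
The paper gives no proof of this statement; it is quoted from \cite{black2025shortcircuitwalksfixed}. Your proposal therefore has to stand on its own, and as written it is a plan whose mathematical core is missing.

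Everything rests on an explicit rational matrix with $O(N)$ rows and a start vertex $u$ with two properties: (a) every independent set can be emptied by one maximal circuit step, as your (iii) requires; and (b) at every point reachable from $u$ by a monotone walk, no maximal step zeroes two adjacent tokens simultaneously. You give neither the matrix nor a proof of (a) or (b).

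The mechanism you sketch for (ii) cannot work. In any matroid, the relation ``$a=b$ or some circuit contains both $a$ and $b$'' is an equivalence relation (matroid connectivity). Your (iii) puts every non-adjacent pair into a common circuit. So whenever an edge $ab$ has a common non-neighbour $c$, transitivity yields a circuit through both $a$ and $b$, and no argument from support-minimality can exclude it. Whether two adjacent tokens can be lowered together is decided by signs. Whether they are \emph{zeroed} together is decided by values: this happens exactly when $x_a/(-g_a)=x_b/(-g_b)$ is the minimum ratio. That is a condition on the current point $x$, not on $\mA$ alone.

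The handles you propose on that condition do not reach it.
\begin{itemize}
\item Monotonicity of a step $x\to x+\alpha g$ is the condition $c^\top g\le 0$, which is independent of $\alpha$ and of $x$. So no choice of $c$ can distinguish emptying a token from lowering it partially. Nor does monotonicity stop a step from raising a cleared token while lowering others. Both are governed by the ratio test at reachable points. (Colouring each vertex by the \emph{last} step that zeroes its token would tolerate refilling, but only once (b) is proved.)
\item Genericity of $u$ is not a separating mechanism either. Property (iii) requires exact ties $u_a/(-g_a)=u_c/(-g_c)$ for non-adjacent $a,c$, so $u$ must be tuned to $\mA$. The absence of such ties for adjacent pairs, at $u$ and at every later point of every monotone walk, is then exactly the property you leave open.
\item ``Generic real coefficients'' are not something a polynomial-time reduction can output. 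They must become explicit rationals of polynomial bit-length with certified non-degeneracy.
\end{itemize}

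Finally, your reduction lets $m$ grow with $N$, and that is the only viable reading of the quantifier. For fixed $m$, \Cref{cor:monotone-diameter} bounds shortest monotone walks by a constant. Enumerating the $O(n^{m+1})$ circuit directions at each step then solves the problem exactly in polynomial time, so hardness for fixed $m$ would imply $\mathrm{P}=\mathrm{NP}$. Your closing remark about $m\ge 2$ is therefore beside the point.
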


Since every circuit walk is trivially monotone for $c = \0$, the circuit diameter is a lower bound on the monotone circuit diameter. We show that \Cref{alg:circuit-augmentation} produces monotone walks, yielding a strongly polynomial bound on the monotone circuit diameter as well.

The key observation is that monotonicity can be characterized in terms of the non-basic coordinates. Let $v = x^*$ be the target vertex with basis $B$ and non-basic indices $N = [n] \setminus B$. By complementary slackness, any objective $c$ minimized at $x^*$ satisfies $c = \mA^\top y + s$ for some $y \in \R^m$ and $s \in \R^n$ with $s_B = \0$ and $s_N \ge \0$. For any circuit step $x^{(i)} \to x^{(i+1)} = x^{(i)} + \alpha g$ with $g \in \ker(\mA)$, we have
\[
c^\top(x^{(i+1)} - x^{(i)}) = \alpha \cdot c^\top g = \alpha \cdot s^\top g = \alpha \sum_{j \in N} s_j g_j\, .
\]
Since $\alpha > 0$ and $s_N \ge \0$, the step is monotone if $g_N \le \0$. Thus, an algorithm that only uses circuit directions $g$ with $g_N \le \0$ is monotone with respect to all objectives minimized at $x^*$.

\Cref{alg:circuit-augmentation} satisfies this property. In the norm-reduction steps, the conformal decomposition of $x^* - x^{(i)}$ yields circuits $g^{(j)} \sqsubseteq x^* - x^{(i)}$. Since $x^*_N = \0$ and $x^{(i)}_N \ge \0$, we have $(x^* - x^{(i)})_N \le \0$, and conformality implies $g^{(j)}_N \le \0$. In the elimination steps, the target $z$ satisfies $z_N \le \0$ by construction, and since $x^{(i)}_N \ge \0$, we again have $(z - x^{(i)})_N \le \0$, so the conformal decomposition yields circuits with $g^{(j)}_N \le \0$.

\begin{corollary}
\label{cor:monotone-diameter}
Let $P = \{x : \mA x = b, x \ge \0\} \subseteq \R^n$ be a polyhedron with $\mA \in \R^{m \times n}$. Then, the monotone circuit diameter of $P$ is bounded by $O(m^2 \log(m))$.
\end{corollary}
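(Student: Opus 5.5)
To prove \Cref{cor:monotone-diameter}, I would fix vertices $u,v\in P$ and an objective $c$ with $v\in\argmin_{x\in P}c^\top x$. I then run \Cref{alg:circuit-augmentation} from $x^{(0)}=u$ with target $x^*=v$. As in the proof of \Cref{cor:main-theorem-vertex-to-vertex}, I first restrict $\mA$ to the columns indexed by $\supp(u)\cup\supp(v)$, so that $n\le 2m$. By \Cref{thm:main-theorem}, the resulting walk has length $2m+O(m^2\log m)=O(m^2\log m)$. It therefore only remains to show that every step of this walk is monotone with respect to $c$.

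\textbf{Reduction and choice of basis.} I need to check that the reduction preserves the optimality of $v$. The restricted polyhedron is a face of $P$, namely the set of points of $P$ with $x_j=0$ for $j\notin\supp(u)\cup\supp(v)$. Circuits of the restricted matrix are circuits of $\mA$, and maximal steps inside this face are also maximal in $P$, since the coordinates outside the restricted support remain zero. Moreover, $v$ still minimizes $c$ over the face. Next I choose a basis $B\supseteq\supp(v)$ of the restricted matrix such that $c$ admits a certificate $c=\mA^\top y+s$ with $s_B=\0$ and $s_N\ge \0$, where $N$ is the complement of $B$. This is the one place that needs care. Under degeneracy, not every basis of $v$ carries such a certificate, but LP duality together with an optimal-basis argument (for instance, the simplex method with a lexicographic rule) guarantees that some optimal basis of $v$ has reduced costs $s_N\ge\0$. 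I would have the algorithm use precisely this $B$. \Cref{alg:circuit-augmentation} accepts any basis of $x^*$, and the analysis in \Cref{thm:main-theorem} only uses that $x^*_N=\0$ and $|B|=m$.

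\textbf{Monotonicity of each step.} For a step $x^{(i+1)}=x^{(i)}+\alpha g$ with $g\in\ker(\mA)$ and $\alpha>0$, we have $c^\top g=s^\top g=\sum_{j\in N}s_jg_j$. This quantity is nonpositive whenever $g_N\le\0$. I verify $g_N\le \0$ for each of the three step types.
\begin{itemize}
\item In Phase~1, $g$ is supported on $\supp(x_N)$ but may have positive entries, so it need not satisfy $g_N\le\0$. I fix this by choosing the orientation of the circuit. Both $g$ and $-g$ are elementary vectors, and we pick the sign with $c^\top g\le 0$. The argument in Phase~1 only requires that $g$ has a negative entry on $\supp(x_N)$ so that some coordinate is zeroed. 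If $c^\top g=0$ both signs work; otherwise the chosen sign still has negative entries, unless $g\ge \0$, in which case $c^\top g=s^\top g\ge 0$ with equality forced along the ray by boundedness of the LP. Alternatively, I can note that after the reduction to $n\le 2m$ Phase~1 is vacuous, which settles this case most simply.
\item In norm-reduction steps, $g\sqsubseteq x^*-x^{(i)}$ and $(x^*-x^{(i)})_N\le\0$, so conformality gives $g_N\le\0$.
\item In elimination steps, $z_N\le\0$ was shown in the proof of \Cref{thm:main-theorem}, so $g\sqsubseteq z-x^{(i)}$ together with $(z-x^{(i)})_N\le\0$ gives $g_N\le\0$.
\end{itemize}

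\textbf{Main obstacle.} I expect the delicate step to be the choice of basis under degeneracy, that is, ensuring a basis $B$ with $s_N\ge\0$ exists inside the reduced column set. Concretely, I would argue that restricting to the face keeps $v$ optimal, and that the LP over the face has an optimal basic solution equal to $v$ with dual-feasible reduced costs. Since $v$ is the unique basic solution of any basis containing its support, any optimal basis produced by such a pivoting rule at value $c^\top v$ whose basic solution is $v$ serves as the required $B$.
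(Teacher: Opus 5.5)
Your proposal is correct and follows the paper's argument. You write $c=\mA^\top y+s$ with $s_B=\0$ and $s_N\ge\0$, so that $c^\top g=\sum_{j\in N}s_jg_j$ for $g\in\ker(\mA)$, and you verify $g_N\le\0$ for the norm-reduction and elimination steps via conformality with $x^*-x^{(i)}$ and $z-x^{(i)}$.

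Your proposal also supplies two details the paper leaves implicit. First, you choose a dual-feasible basis of $v$; the paper's complementary-slackness remark silently assumes one under degeneracy. You can justify this by running the simplex method from a basis of $v$, where optimality of $v$ forces every pivot to be degenerate. Second, you observe that Phase~1 is vacuous after restricting to $\supp(u)\cup\supp(v)$.
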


Combined with the hardness result in \Cref{thm:hardness}, our result leaves a gap of $O(m^{1+\eps} \log(m))$ between the best known upper and lower bounds for approximating the shortest monotone circuit walk between vertices.

\newcommand{\etalchar}[1]{$^{#1}$}

\end{document}